\tikzstyle{vertex}=[circle, draw, inner sep=0pt, minimum size=4pt]
\tikzstyle{vtx}=[circle, draw, inner sep=0pt, minimum size=8pt]
\definecolor{darkgreen}{cmyk}{.9,0,.9,.2}
\definecolor{midgray}{gray}{0.60}
\definecolor{lightgray}{gray}{0.90}
\definecolor{lmgray}{gray}{0.70}
\def\@tocline#1#2#3#4#5#6#7{\relax
  \ifnum #1>\c@tocdepth 
  \else
    \par \addpenalty\@secpenalty\addvspace{#2}%
    \begingroup \hyphenpenalty\@M
    \@ifempty{#4}{%
      \@tempdima\csname r@tocindent\number#1\endcsname\relax
    }{%
      \@tempdima#4\relax
    }%
    \parindent\z@ \leftskip#3\relax \advance\leftskip\@tempdima\relax
    \rightskip\@pnumwidth plus4em \parfillskip-\@pnumwidth
    #5\leavevmode\hskip-\@tempdima
      \ifcase #1
       \or\or \hskip 1em \or \hskip 2em \else \hskip 3em \fi%
      #6\nobreak\relax
    \hfill\hbox to\@pnumwidth{\@tocpagenum{#7}}\par
    \nobreak
    \endgroup
  \fi}
\newtheorem{problem}{Problem}
\newtheorem*{rep@theorem}{\rep@title}
\newcommand{\newreptheorem}[2]{%
\newenvironment{rep#1}[1]{%
 \def\rep@title{#2 \ref{##1}}%
 \begin{rep@theorem}}%
 {\end{rep@theorem}}}
\newtheorem*{rep@conjecture}{\rep@title}
\newcommand{\newrepconjecture}[2]{%
\newenvironment{rep#1}[1]{%
 \def\rep@title{#2 \ref{##1}}%
 \begin{rep@conjecture}}%
 {\end{rep@conjecture}}}
\newcommand{\addresseshere}{%
  \enddoc@text\let\enddoc@text\relax
}
\theoremstyle{definition}
\newtheorem{definition}{Definition}[section]
\newtheorem{theorem}[definition]{Theorem}
\newtheorem{conjecture}[definition]{Conjecture}
\newtheorem{lemma}[definition]{Lemma}
\newtheorem{corollary}[definition]{Corollary}
\newtheorem{example}[definition]{Example}
\newcommand{\Z}{\mathbb{Z}}
\newcommand{\N}{\mathbb{N}}
\newcommand{\Tau}{\mathrm{T}}
\newcommand{\containedpf}{$B_{n,k}$ }
\newcommand{\standardpf}{$PF_n$ }
\def\al{\alpha}
\title{A generalization of parking functions\\allowing backward movement}
\author{Alex Christensen}
\address{University of Arizona, Department of Mathematics, United States}
\email{\textcolor{blue}{\href{mailto:ajc333@comcast.net}{ajc333@comcast.net}}}
\author{Pamela E. Harris}
\address{Department of Mathematics and Statistics, Williams College, United States}
\email{\textcolor{blue}{\href{mailto:peh2@williams.edu}{peh2@williams.edu}}}
\author{Zakiya Jones}
\address{Pomona College, Department of Mathematics, United States}
\email{\textcolor{blue}{\href{mailto:zakiyacmjones@gmail.com}{zakiyacmjones@gmail.com}}}
\author{Marissa Loving}
\address{Department of Mathematics, University of Illinois at Urbana-Champaign, United States}
\email{\textcolor{blue}{\href{mailto:mloving2@illinois.edu}{mloving2@illinois.edu}}}
\author{Andr\'es Ramos Rodr\'iguez}
\address{University of Puerto Rico, Rio Piedras, Department of Computer Science}
\email{\textcolor{blue}{\href{mailto:ramosandres443@gmail.com }{ramosandres443@gmail.com}}}
\author{Joseph Rennie}
\address{Department of Mathematics, University of Illinois at Urbana-Champaign, United States}
\email{\textcolor{blue}{\href{mailto:rennie2@illinois.edu}{rennie2@illinois.edu}}}
\author{Gordon Rojas Kirby}
\address{UC Santa Barbara, Department of Mathematics, United States}
\email{\textcolor{blue}{\href{mailto:gkirby@math.ucsb.edu}{gkirby@math.ucsb.edu}}}
\begin{document}

\maketitle

\begin{abstract}
Classical parking functions are defined as the parking preferences for $n$ cars driving (from west to east) down a one-way street containing parking spaces labeled from $1$ to $n$ (from west to east). Cars drive down the street toward their preferred spot and park there if the spot is available. Otherwise, the car continues driving down the street and takes the first available parking space, if such a space exists. If all cars can park using this parking rule, we call the $n$-tuple containing the cars' parking preferences a parking function.

In this paper, we introduce a generalization of the parking rule allowing cars whose preferred space is taken to first proceed up to $k$ spaces west of their preferred spot to park before proceeding east if all of those $k$ spaces are occupied. We call parking preferences which allow all cars to park under this new parking rule $k$-Naples parking functions of length $n$.
This generalization gives a natural interpolation between classical parking functions, the case when $k=0$, and all $n$-tuples of positive integers $1$ to $n$, the case when $k\geq n-1$. Our main result provides a recursive formula for counting $k$-Naples parking functions of length $n$. We also give a characterization for the $k=1$ case by introducing a new function that maps $1$-Naples parking functions to classical parking functions, i.e. $0$-Naples parking functions. Lastly, we present a bijection between $k$-Naples parking functions of length $n$ whose entries are in weakly decreasing order and a family of signature Dyck paths. 
\end{abstract}

\section{Introduction}
Parking functions were introduced independently by Ronald Pyke and by Alan Konheim and Benjamin Weiss in relation to hashing problems \cites{Pyke,KonheimAndWeiss}.
Parking functions are combinatorial objects defined as follows. Let the set of natural numbers be defined as $\N := \{1,2,3, \dots\}$, and for \mbox{$n\in\N$} let $[n]:=\{1,\dots, n\}$. Now, consider $n$ parking spaces on a one-way street arranged in a line numbered $1$ to $n$ from west to east. Suppose there are $n$ cars, denoted $c_1,c_2,\ldots,c_n$, that drive in order down this one-way street. For all $1\leq i\leq n$, each car $c_i$ has a preferred parking spot \mbox{$a_i\in [n]$} and multiple cars are allowed to have the same preference. This is illustrated\footnote{
Black car vector. Digital image. The London Telegraph. 13 August 2019, \url{https://www.goodfreephotos.com/vector-images/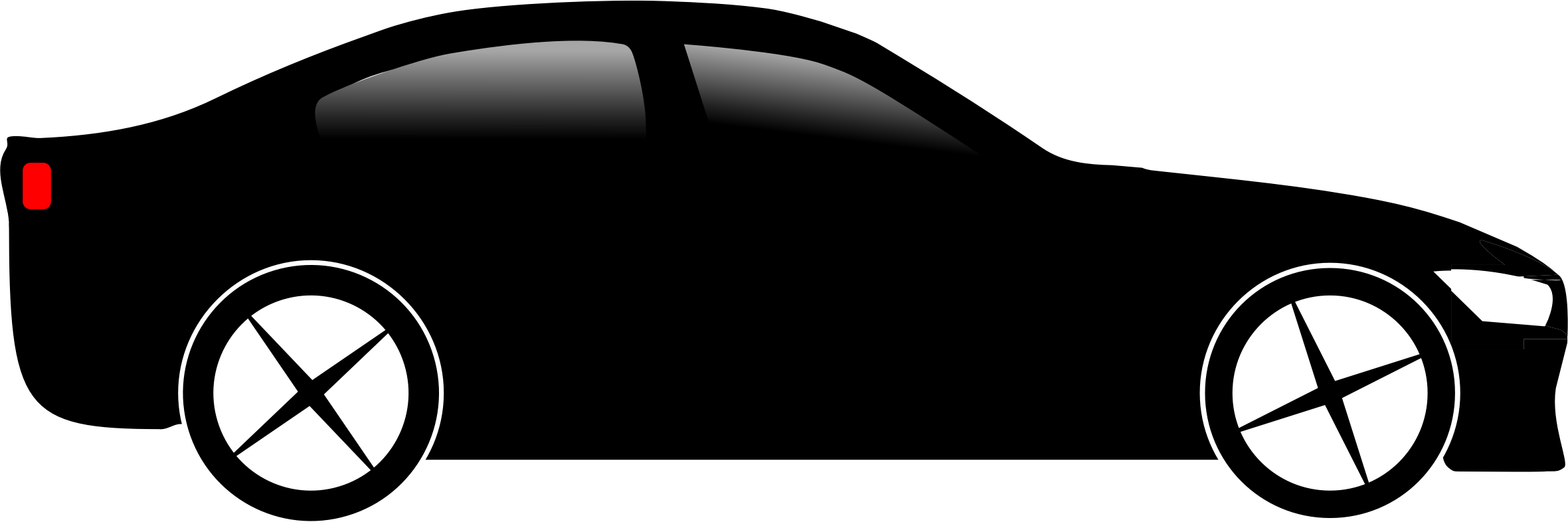.php}.}
 in Figure \ref{fig:pfillustration}.

\begin{figure}[h]
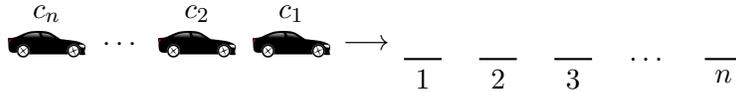

    \centering
    \begin{tikzpicture}
    \node[inner sep=0pt](a1) at (-1.5,.6)
    {$c_1$};
    \node[inner sep=0pt](a1) at (-1.5,.2)
    {\includegraphics[width=.4in]{black-car-vector.png}};
    \node[inner sep=0pt](a1) at (-2.75,.6)
    {$c_2$};
    \node[inner sep=0pt](a1) at (-2.75,.2)
    {\includegraphics[width=.4in]{black-car-vector.png}};
    \node[inner sep=0pt](a1) at (-4.75,.6)
    {$c_n$};
    \node[inner sep=0pt](a1) at (-4.75,.2)
    {\includegraphics[width=.4in]{black-car-vector.png}};
    \node[inner sep=0pt](a1) at (-3.75,.2)
    {$\cdots$};
    \node[inner sep=0pt](a1) at (-.5,.2)
    {$\longrightarrow$};
            \draw[thick] (0,0) to node[below]{1} (0.5,0);
    \draw[thick] (1,0) to node[below]{2} (1.5,0);
    \draw[thick] (2,0) to node[below]{3} (2.5,0);
    \node at (3.25,0) {$\dots$};
    \draw[thick] (4,0) to node[below]{$n$} (4.5,0);
    \end{tikzpicture}
    \caption{Parking function illustration.}
    \label{fig:pfillustration}
\end{figure}
A \textbf{parking preference} of length $n$ is an $n$-tuple of integers in $[n]$ where the $i$-th component corresponds to the preferred parking spot of car $c_i$. We denote the set of parking preferences of length $n$ as $PP_n$. Note that $|PP_n|=n^n$. For a parking preference  $\alpha=(a_1,\dots,a_n)\in PP_n$, we establish the following parking rule: for all $1\leq i\leq n$, $c_i$ starts at parking space $1$ and drives toward its preferred parking spot $a_i$. If $a_i$ is unoccupied $c_i$ parks. Otherwise, $c_i$ proceeds forward until it reaches the next available parking spot. If every parking spot numbered from $a_i$ up to and including $n$ is taken, then $c_i$ is unable to park. On the other hand, if every car is able to park given the preference $\al\in PP_n$, then we say that $\alpha$ is a \textbf{parking function}. 
A necessary and sufficient condition to determine if a parking preference  $\alpha = (a_1, a_2, \dots, a_n) \in PP_n$ is a parking function is determined by considering $\beta = (b_1,\dots,b_n)$ which is the increasing rearrangement of the entries in $\alpha$. Then, $\alpha$ is a parking function if and only if $b_i \leq i$ for each $i$. We denote the set of all parking functions of length $n$ as $PF_n$. It is known that $|PF_n|=(n+1)^{n-1}$ (see \cite{KonheimAndWeiss}).

Parking functions are interesting in their own right and have applications in combinatorics, group theory, the study of hyperplane arrangements, and computer science. Many generalizations of parking functions exist and the main results give formulas to count the number of generalized parking functions. 
For example, $(n,m)$-parking functions allow the $n$ cars to park in a line of $n\leq m$ parking spots and are counted by $(n-m+1)(n+1)^{m-1}$ \cite{nmParkingFunctions}.
Another generalization of parking functions given in \cite{xParkingFunctions}, known as  $\textbf{x}$-parking functions, are defined by generalizing the necessary and sufficient condition so that given $\alpha\in PP_n$ and a vector $\textbf{x}=(x_1,\dots,x_n)\in \Z^n$, $\alpha$ is an $\textbf{x}$-parking function if its increasing rearrangement $\beta=(b_1,\dots,b_n)$ satisfies $b_i\leq x_1+\cdots+ x_i$ for each $i$. For a survey of classical parking functions and their generalizations, we refer the reader to \cite{YanSurvey}.

In this paper, we study a new generalization of parking functions, introduced by Baumgardner in \cite{BaumgardnerHonorsContract}, called {\bf{Naples parking functions}}. In this generalization, the parking rule for the parking preference $\alpha=(a_1,a_2,\ldots,a_n)$ is as follows. Car $c_i$ drives to its preferred parking spot $a_i$, and if the spot is empty $c_i$ parks there. Otherwise, $c_i$ first checks back to see if parking spot $a_i-1$ (the one directly behind its preferred parking spot) is available. 
If spot $a_i-1$ is empty and $a_i-1\geq 1$, $c_i$ parks there. Otherwise, $c_i$ continues east and parks in the first unoccupied spot. If under this new parking rule the parking preference $\alpha$ allows all cars to park, then we call $\alpha$ a Naples parking function. 
We extend this parking rule by allowing a car that finds its preferred parking spot occupied to look back up to $k$ spaces, for $0\leq k< n$. The car backs up one space at a time and parks in the first spot available. If none of the $k$ spaces before its preferred parking spot are available, then the car continues east past its preferred spot and parks in the first available spot. If under the parking preference $\alpha$ all cars can park using this new parking rule, then we say that $\alpha$ is a $k$-Naples parking function of length $n$ and we denote this set by $PF_{n,k}$. Then
$PF_{n,0}=PF_n$, $PF_{n,1}$ is the set of Naples parking functions, and $PF_{n,k-1}\subseteq PF_{n,k}$ for all $0\leq k< n$.

Our first main result provides a recursive formula for the number of $k$-Naples parking functions of length $n$.

\begin{theorem}\label{thm:mainrecurssion}
If $k,n\in \N$ with $0\leq k\leq n-1$, then the number of $k$-Naples parking functions of length $n+1$ is counted recursively by
$$|PF_{n+1,k}| =  \sum_{i=0}^{n} \binom{n}{i} \min((i+ 1) +k, n +1)|PF_{i,k}|(n-i+1)^{n-i-1}.$$
\end{theorem}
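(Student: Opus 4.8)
The plan is to prove the recursion by a direct decomposition of $PF_{n+1,k}$ according to the behavior of the last car $c_{n+1}$ to enter the street. Because the $k$-Naples parking outcome genuinely depends on the \emph{order} in which cars arrive (for instance, when $k=1$ and $n=3$ the preference multiset $\{2,3,3\}$ parks all cars in the order $(3,3,2)$ but fails in the order $(2,3,3)$), one cannot argue on multisets and must instead track the process itself; conditioning on the last car is what makes the dynamics tractable. I would fix a $k$-Naples parking function $\alpha=(a_1,\dots,a_{n+1})$ and let $i+1$ denote the spot in which $c_{n+1}$ parks, where $0\le i\le n$. When $c_{n+1}$ arrives, the first $n$ cars occupy every spot except $i+1$, so the entire count splits as (configurations of $c_1,\dots,c_n$ that leave spot $i+1$ empty) $\times$ (preferences of $c_{n+1}$ landing it in spot $i+1$), summed over $i$.

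For the distinguished car I would first count its admissible preferences. Since spots $\{1,\dots,n+1\}\setminus\{i+1\}$ are all full when $c_{n+1}$ arrives, a short case analysis shows that $c_{n+1}$ parks in spot $i+1$ exactly when $a_{n+1}\in\{1,\dots,\min(i+1+k,\,n+1)\}$: preferences at most $i+1$ reach $i+1$ either directly or by failing to back up and then proceeding east to the first gap, while preferences $i+2,\dots,i+1+k$ (capped at $n+1$) reach it by backing up at most $k$ spaces. This yields the factor $\min((i+1)+k,\,n+1)$. Next I would analyze the first $n$ cars. No such car can prefer spot $i+1$, for it would then park there; a car preferring a spot $\le i$ can never cross to the right, as it would otherwise fill the empty spot $i+1$; and a car preferring a spot $\ge i+2$ can never cross to the left without parking in $i+1$. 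Hence the first $n$ cars split into $i$ ``left'' cars filling $\{1,\dots,i\}$ and $n-i$ ``right'' cars filling $\{i+2,\dots,n+1\}$, and since these groups occupy disjoint blocks their processes do not interact and their interleaving is irrelevant. The left cars, read in label order, form precisely a $k$-Naples parking function of length $i$, contributing $|PF_{i,k}|$, and choosing which $i$ of the $n$ labels are left cars contributes $\binom{n}{i}$.

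It remains to count the right configurations: preference sequences of $m:=n-i$ cars, with preferences in the block $\{i+2,\dots,n+1\}$, that fill the block under the $k$-Naples rule while leaving the adjacent spot $i+1$ empty. The claim to establish, and the crux of the whole proof, is that there are exactly $(n-i+1)^{n-i-1}=|PF_{n-i}|$ of these, matching the number of classical parking functions of length $m$. This is delicate precisely because the valid right configurations are \emph{not} the classical parking functions themselves: after relabeling the block to $[m]$, a sequence such as $(2,2,2)$ for $m=3$ leaves the left-adjacent spot empty via backward moves yet is not a classical parking function, so the equality is a coincidence of cardinality rather than of sets. I would prove it by showing the right count is independent of $k$: at $k=0$ no backward motion occurs, the ``leave the adjacent spot empty'' condition is automatic, and the count is exactly $|PF_{m}|=(m+1)^{m-1}$; the remaining task is to produce a $k$-preserving bijection (or an inductive argument reusing the same last-car decomposition one size down) identifying the $k$-valid fillings with the classical ones. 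I expect this lemma to be the main obstacle, since it is where the backward-movement mechanism must be controlled quantitatively. Once it is in hand, assembling the factors $\binom{n}{i}$, $\min((i+1)+k,\,n+1)$, $|PF_{i,k}|$, and $(n-i+1)^{n-i-1}$ and summing over $i$ yields the stated recursion.
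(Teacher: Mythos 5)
Your decomposition is exactly the one the paper uses: condition on the spot $i+1$ where $c_{n+1}$ parks, choose the $i$ ``left'' cars in $\binom{n}{i}$ ways, count their preference assignments by $|PF_{i,k}|$, count the last car's admissible preferences by $\min((i+1)+k,\,n+1)$, and observe that the right cars must fill $\{i+2,\dots,n+1\}$ without ever parking in (equivalently, ever being able to back into) the empty spot $i+1$. All of those steps are correctly justified in your write-up. But the proof is not complete: the factor $(n-i+1)^{n-i-1}$ rests on the claim that the number of these right-block configurations --- what the paper calls \emph{contained parking functions} $B_{n-i,k}$ --- equals $(n-i+1)^{n-i-1}$ for \emph{every} $k$, and you explicitly leave this unproven, calling it ``the remaining task'' and ``the main obstacle.'' Since you yourself (correctly) note that these configurations are not the classical parking functions of the block, equality of cardinalities is precisely what has to be demonstrated, and neither of your two suggested strategies is carried out. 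The inductive fallback in particular is in danger of circularity: decomposing a contained configuration by its own last car produces left sub-blocks that must again be \emph{contained} (not merely $k$-Naples), so one does not straightforwardly land back on the recursion being proved.

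The missing idea, which is how the paper closes this gap (its Lemma on $|B_{n,k}|$), is Pollak's circle argument adapted to backward movement: place $m+1$ spots on a circle, with the extra spot $0$ inserted between $1$ and $m$; under the circular $k$-Naples rule (check up to $k$ spots counterclockwise, then proceed clockwise) every one of the $(m+1)^m$ preference sequences parks all $m$ cars and leaves exactly one spot empty, and a sequence is a contained configuration if and only if the empty spot is $0$. Since the parking process commutes with rotation, each of the $(m+1)^m$ sequences has exactly one rotation among its $m+1$ cyclic shifts leaving spot $0$ empty, whence $|B_{m,k}| = (m+1)^m/(m+1) = (m+1)^{m-1}$, independently of $k$ --- exactly the $k$-independence you hoped to establish. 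With that lemma supplied, your argument assembles into the paper's proof; without it, the count you need for the right blocks is asserted rather than proved.
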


Given a recurrence, there are well-established ways in which one can develop closed formulas. However, these techniques cannot be applied to the recursive formula in Theorem \ref{thm:mainrecurssion} since simplifying the recursion by removing factors yields recurrences that enumerate combinatorial objects for which there are no known formulas. 
For example, if we simplify the recursion to $a_{n+1} = \sum_{i = 0}^n {n\choose i}a_i$ with seed values $a_0 = a_1 = 1$, it yields the Bell numbers\footnote{\href{http://oeis.org/A000110}{OEIS \textcolor{blue}{A000110}}.}, for which there is no known closed formula. If we incorporate the factor $(n-i+1)^{n-i-1}$ to the simplified recurrence, then $a_{n+1} = \sum_{i = 0}^n {n\choose i}(n-i+1)^{n-i-1}a_i $ counts the number of forests of trees on $n$ labeled nodes\footnote{\href{http://oeis.org/A001858}{OEIS \textcolor{blue}{A001858}}.}, for which there is also no known closed formula. Lastly, incorporating the term $\min((i+ 1) +k, n +1)$ into the previous recurrence yields the recursion presented in Theorem~\ref{thm:mainrecurssion}.

In light of the fact that these subsets of the set $PF_{n,k}$ do not have closed formulas for their size, we focus our study on the growth of $|PF_{n,k}^*|:=|PF_{n,k}\setminus PF_{n,k-1}|$ as we fix $n$ and increase $k$ from $1$ to $n$, and where $PF_{n,0}^*=\emptyset$. Experimental evidence suggests that $|PF_{n,k}^*|$ is largest when $k=1$, which corresponds to the number of parking preferences gained by changing the parking rule defining classical parking functions to that defining Naples parking functions.
For $n=25,50,75,100$ and $0\leq k\leq n$, we plot the size of $PF^*_{n,k}$ in Figure~\ref{fig:expevidence}.

\begin{figure}[h]
    \centering
     \begin{subfigure}[b]{0.4\textwidth}
     \centering
    \begin{tikzpicture}
    \node[inner sep=0pt](a1) at (0,0)
    {\includegraphics[width=2.5in]{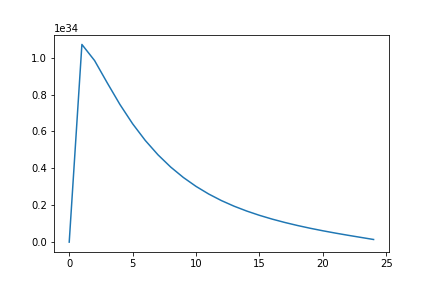}};
    \node[inner sep=0pt](a2) at (-3.5,.25) {$|PF_{n,k}^*|$};
    \node[inner sep=0pt](ak) at (.25,-2) {$k$};
    \end{tikzpicture}
     \caption{$n=25$}
     \end{subfigure}
     \qquad\qquad
     \begin{subfigure}[b]{0.4\textwidth}
     \centering
    \begin{tikzpicture}
    \node[inner sep=0pt] (pic1) at (0,0)
    {\includegraphics[width=2.5in]{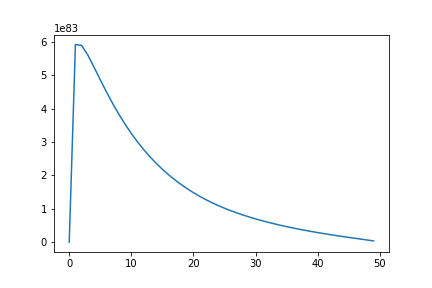}};
    \node[inner sep=0pt] (P) at (-3.5,.25) {$|PF_{n,k}^*|$};
    \node[inner sep=0pt] (k) at (.25,-2) {$k$};
    \end{tikzpicture}
     \caption{$n=50$}
     \end{subfigure}
     \\
     \begin{subfigure}[b]{0.4\textwidth}
     \centering
    \begin{tikzpicture}
    \node[inner sep=0pt] (pic1) at (0,0)
    {\includegraphics[width=2.5in]{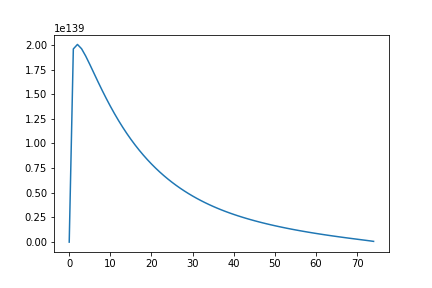}};
    \node[inner sep=0pt] (P) at (-3.5,.25) {$|PF_{n,k}^*|$};
    \node[inner sep=0pt] (k) at (.25,-2) {$k$};
    \end{tikzpicture}
     \caption{$n=75$}
     \end{subfigure}
      \qquad
      \qquad
    \begin{subfigure}[b]{0.4\textwidth}
     \centering
    \begin{tikzpicture}
    \node[inner sep=0pt] (pic1) at (0,0)
    {\includegraphics[width=2.5in]{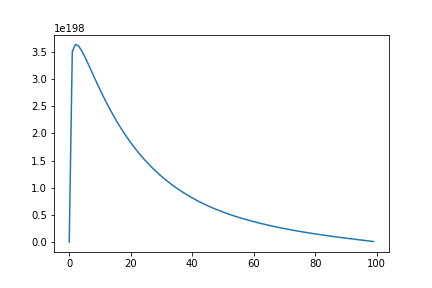}};
    \node[inner sep=0pt] (P) at (-3.5,.25) {$|PF_{n,k}^*|$};
    \node[inner sep=0pt] (k) at (.25,-2) {$k$};
    \end{tikzpicture}
     \caption{$n=100$}
     \end{subfigure}
     \caption{Plots for $|PF_{n,k}^*|$ for varying values of $n$ and with $1\leq k\leq n$. The scale of the $y$-axis is scaled by a factor of $10^{34}$, $10^{83}$, $10^{139}$, and $10^{198}$, when $n=25, 50, 75$, and $100$, respectively.}
    \label{fig:expevidence}
\end{figure}

Given this observation, Naples parking functions are of particular interest. Our next main result gives a necessary and sufficient condition to characterizing Naples parking functions.

\begin{theorem}\label{TheBigBoi}
Fix $n\in\N$. Let $\alpha = (a_1, a_2, \dots, a_n)\in PP_n$, and define $\Tau:PP_n\to PP_n$ as $\Tau(\alpha)=(\tau(a_1), \tau(a_2), \dots,\tau(a_n))$, where $\tau(a_i)$ is defined

\[
\tau(a_i) =\begin{cases}  
      a_i & \text{if $i=1$, or if $a_i=1$, or if $a_i\neq 1$ and }  a_i \neq \tau(a_j)  \text{ for all }1\leq j < i\leq n\\
      a_i - 1 & \text{if $a_i\neq 1$ and $a_i = \tau(a_j)$ for some } 1\leq j < i\leq n.
   \end{cases}
\]
Then $\alpha$ is a Naples parking function if and only if $\Tau(\alpha)$ is a parking function.
\end{theorem}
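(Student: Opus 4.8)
The plan is to prove something stronger than the stated equivalence: that the Naples parking process applied to $\alpha$ and the classical parking process applied to $\Tau(\alpha)$ produce the \emph{identical} outcome car by car. Precisely, I would show by induction on $i$ that, for every $i$, car $c_i$ parks in the same spot under the Naples rule with preference list $\alpha$ as it does under the classical rule with preference list $\Tau(\alpha)$ (and $c_i$ fails to park in one process exactly when it fails in the other). Since the occupied sets then agree after every car, the two processes park all $n$ cars simultaneously or fail simultaneously, and the desired biconditional --- $\alpha$ is a Naples parking function if and only if $\Tau(\alpha) \in PF_n$ --- follows at once.

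The engine of the induction is the following standard structural fact about classical parking, which I would isolate as a lemma: if, at some stage of the classical process, a spot $s \geq 2$ is occupied while spot $s-1$ is empty, then some already-parked car had preference exactly $s$. Indeed, the car occupying $s$ either preferred $s$ outright, or it was pushed east from some preference $t < s$; in the latter case all of $t, t+1, \dots, s-1$ were occupied when that car passed, and since occupancy only increases, $s-1$ would still be occupied, a contradiction.

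For the inductive step, assume the two processes have produced the same occupied set $S$ after cars $c_1, \dots, c_{i-1}$, and examine $c_i$ according to the two branches defining $\tau(a_i)$. If $\tau(a_i) = a_i$, then the Naples car does not back up: this is automatic when $a_i = 1$, and when $a_i \neq 1$ (so that $a_i \neq \tau(a_j)$ for all $j < i$) the lemma, in contrapositive form applied to the classical process on $\tau(a_1), \dots, \tau(a_{i-1})$ whose occupied set is $S$, forces $a_i \notin S$ or $a_i - 1 \in S$; either way the Naples car behaves exactly like the classical car with preference $a_i$. If instead $\tau(a_i) = a_i - 1$, then some $c_j$ with $j<i$ had modified preference $a_i$, so spot $a_i$ is occupied; the Naples car therefore attempts to back up to $a_i - 1$, and one checks directly that, whether or not $a_i - 1$ is free, it lands in the same spot as the classical car with preference $a_i - 1$. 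The base case $i=1$ is immediate since $\tau(a_1) = a_1$ and the street is empty.

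The main obstacle is the second branch of the case analysis, where the transformation appears to ``over--back-up'': $\tau$ sends $a_i$ to $a_i - 1$ purely because an earlier modified preference equals $a_i$, without checking that $a_i - 1$ is actually free. The crux is to verify that when $a_i - 1$ is occupied the classical car with preference $a_i - 1$ simply re-advances east past the (necessarily occupied) spot $a_i$ and rejoins the Naples car's trajectory, so that the two always agree. Making this coincidence precise --- together with a careful accounting that failures to park occur symmetrically in both processes --- is the technical heart of the argument; the structural lemma then does the remaining work in the first branch.
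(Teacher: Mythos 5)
Your proof is correct, and it is tighter than the paper's own argument, which proceeds differently. The paper proves the two implications separately (the forward direction directly, the backward one by contrapositive), and in each case argues only about \emph{parkability}: it notes that $\tau(a_i)\in\{a_i,a_i-1\}$, asserts that $\tau(a_i)=a_i-1$ exactly when car $c_i$ finds spot $a_i$ occupied, and then reasons that an empty spot $q$ with $a_i-1\leq \tau(a_i)\leq q\leq n$ exists for one process if and only if it exists for the other. That argument implicitly assumes precisely what you prove explicitly: that the Naples process on $\alpha$ and the classical process on $\Tau(\alpha)$ occupy the same set of spots at every stage, and that the syntactic condition in the definition of $\tau$ (``$a_i=\tau(a_j)$ for some $j<i$'') coincides with the semantic condition (``spot $a_i$ is occupied when $c_i$ arrives''). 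Your strengthened induction hypothesis --- car $c_i$ lands in the \emph{same spot} in both processes, or fails in both --- together with your structural lemma (if spot $s\geq 2$ is occupied while $s-1$ is empty, some earlier car preferred exactly $s$) is exactly the missing justification for that identification: the lemma handles the branch $\tau(a_i)=a_i$, and the monotonicity of occupancy handles the branch $\tau(a_i)=a_i-1$, where spot $a_i$ must already be full so both cars either take $a_i-1$ or scan east past $a_i$ together. In short, your route buys rigor (it closes a genuine gap, since $\Tau$ is defined without ever consulting the parking process), while the paper's route buys brevity at the cost of leaving the occupied-set coincidence unproved.
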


It is known that every rearrangement of the entries of a parking function is also a parking function. However, this is not true for $k$-Naples parking functions that are not parking functions. Therefore,
we study decreasing $k$-Naples parking functions of length $n$, those whose entries are in weakly-decreasing order, and give a bijection from this set to a set of decreasing lattice paths of length $2n$, which we call $k$-lattice paths. These lattice paths are a particular family of signature Dyck paths and we enumerate certain families of them. We note that signature Dyck paths were defined by Cellabos and Gonz\'alez D'Le\'on, but in general there are no known closed formulas enumerating these combinatorial objects \cite{D'LeonandCeballos}.

\begin{theorem}\label{DecreasingNaplesiffDyck}
If $n,k\in \N$ with $1\leq k\leq n$, then the set of decreasing $k$-Naples parking functions of length $n$ and the set of $k$-lattice paths of length $2n$ are in bijection.
\end{theorem}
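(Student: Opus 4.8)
The plan is to reduce Theorem~\ref{DecreasingNaplesiffDyck} to an explicit inequality description of the decreasing $k$-Naples parking functions, and then to recognize that inequality as exactly the defining condition of a $k$-lattice path. Write a weakly decreasing preference as $\alpha = (a_1, \ldots, a_n)$ with $a_1 \ge a_2 \ge \cdots \ge a_n \ge 1$, and for $j \in [n]$ set $N_j := |\{\, i : a_i \ge j \,\}|$, the number of cars whose preference is at least $j$. The central claim is the following characterization, valid for all $0 \le k \le n-1$ (not only $k=1$, so Theorem~\ref{TheBigBoi} is not needed): such an $\alpha$ is a $k$-Naples parking function if and only if
\[
a_i \le \min(n,\; n - i + k + 1) \qquad \text{for every } i \in [n],
\]
equivalently $N_j \le \min(n,\; n - j + k + 1)$ for every $j$. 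The two forms are interchanged by the standard conjugation between a weakly decreasing sequence and its counting function $N_j$.

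Next I would prove the characterization. For necessity, observe that under the $k$-Naples rule a car preferring a spot $p$ can only come to rest in a spot $\ge p - k$, since it backs up at most $k$ steps before moving east. Hence every one of the $N_j$ cars preferring a spot $\ge j$ must park in the window $W_j := \{\, \max(1, j-k), \ldots, n \,\}$, which contains at most $n - j + k + 1$ spots; if $N_j$ exceeded this, the pigeonhole principle would leave some car unable to park. For sufficiency, suppose some car $c_i$ fails. Failure means every spot it can reach, namely the whole window $W_{a_i} := \{\, \max(1, a_i - k), \ldots, n \,\}$, is already occupied when $c_i$ arrives. If $a_i \le k$ then $W_{a_i} = [n]$, forcing all $n$ spots to be full after only $i - 1 < n$ cars, which is impossible. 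If $a_i > k$ then $|W_{a_i}| = n - a_i + k + 1$, and all of these spots are filled by the first $i-1$ cars, so $i - 1 \ge n - a_i + k + 1$, i.e. $a_i \ge n - i + k + 2$, contradicting the hypothesis $a_i \le n - i + k + 1$. Thus no car fails and $\alpha$ is a $k$-Naples parking function.

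With the characterization in hand, the bijection is the standard correspondence between weakly decreasing sequences confined beneath a weakly decreasing profile and monotone lattice paths confined below that profile. The inequality above says precisely that the sequence $(a_1 - 1, \ldots, a_n - 1)$ is weakly decreasing and fits beneath the staircase-type profile $\sigma_i := \min(n-1,\; n - i + k)$, and this profile is exactly the signature cutting out the $k$-lattice paths. I would make the correspondence explicit by sending $\alpha$ to the decreasing lattice path $P(\alpha)$ of length $2n$ that traces the boundary of the Young diagram of $(a_1 - 1, \ldots, a_n - 1)$ inside the signature region, recording at the $i$-th row a horizontal run determined by $a_i$ followed by a vertical step. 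The monotonicity $a_1 \ge \cdots \ge a_n$ is equivalent to $P(\alpha)$ being a genuine decreasing lattice path, and the bound $a_i \le \sigma_i + 1$ is equivalent to $P(\alpha)$ never crossing the signature boundary, i.e. to $P(\alpha)$ being a $k$-lattice path. The inverse map reads the successive run lengths of a $k$-lattice path back off as the entries $a_i$, and one checks directly that these two maps are mutually inverse.

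The essential content, and the step I expect to be most delicate, is the sufficiency half of the characterization: ruling out the possibility that early cars back up into low spots and thereby waste capacity that a later car needs. The reachable-window counting argument sidesteps tracking the exact parking dynamics, but its boundary case $a_i \le k$ (where backing up is truncated at spot $1$) must be handled separately, and one must confirm that ``reachable'' really does mean the full window $W_{a_i}$ under the one-step-at-a-time back-up rule. The remaining work — translating the two inequalities into the path conditions and verifying the maps invert one another — is routine once the defining conventions for $k$-lattice paths are pinned down, so the proof rests almost entirely on the characterization lemma.
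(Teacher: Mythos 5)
Your proposal is correct, and it shares the paper's skeleton: both routes hinge on the same key lemma, namely that a weakly decreasing $\alpha=(a_1,\dots,a_n)$ lies in $PF_{n,k}$ if and only if $a_i\le\min(n,\,n+k+1-i)$ for all $i$, after which the bijection with $k$-lattice paths is the standard correspondence (east step at height $a_i-1$ in column $i$) in both treatments. Where you genuinely differ is in how the lemma is proved, and your version is both cleaner and slightly stronger. The paper proves necessity dynamically: assuming $a_i>n+k+1-i$, it asserts that the ``most optimal'' choice for the earlier cars is $a_j=n+k+2-i$ for all $j<i$ and then traces the parking process to show $c_i$ fails; this optimality assertion is left unjustified, whereas your pigeonhole argument --- the cars $c_1,\dots,c_i$ all prefer spots $\ge a_i$, hence must all park in the window $W_{a_i}$ of size $\min(n,\,n-a_i+k+1)$ --- needs no such claim. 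For sufficiency the paper again follows the parking dynamics in two cases ($a_i\le k$ and $a_i>k$), counting occupied spots along the way; your argument instead observes that failure of $c_i$ under the $k$-Naples rule is exactly the event that all of $W_{a_i}$ is occupied on arrival, which at most $i-1$ previously parked cars cannot achieve when $a_i\le n+k+1-i$. That identification of ``failure'' with ``full window'' is correct under the one-step-at-a-time back-up rule, and your boundary case $a_i\le k$ (window equal to all of $[n]$) is handled properly. Notably, your sufficiency proof never uses the decreasing hypothesis, so you in fact prove the stronger statement that \emph{every} parking preference, in any order, satisfying $a_i\le\min(n,\,n+k+1-i)$ for all $i$ is a $k$-Naples parking function; the decreasing hypothesis enters only in necessity, where your conjugation between the $a_i$ and the counting function $N_j$ is a standard and correct reduction. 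The only portion you leave schematic --- verifying that the sequence-to-path maps invert one another and that ``does not cross $y=n-x+k$'' translates to $a_i\le n+k+1-i$ --- is treated just as briefly in the paper, so nothing essential is missing from your argument.
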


This paper is organized as follows. Section \ref{sec:Preliminaries} gives a precise definition of the $k$-Naples parking functions,  some illustrative examples, and some preliminary results.
In Section \ref{sec:RecursiveCount}, we prove \mbox{Theorem \ref{thm:mainrecurssion}}, thereby providing a formula for computing the number of $k$-Naples parking functions for any length $n$. Then, in Section \ref{sec:TauCharacterization} and \ref{sec:DecreasingLatticePaths}, we prove Theorem~\ref{TheBigBoi} and \ref{DecreasingNaplesiffDyck}, respectively. For the interested reader, we scatter open problems throughout.

\section{Background and preliminaries}\label{sec:Preliminaries}
Given an integer $1\leq k\leq n-1$, we consider a new parking rule for the parking preference $\alpha=(a_1,a_2,\ldots,a_n)$. Car $c_i$ drives to its preferred parking spot $a_i$, and if the spot is occupied, then car $c_i$ first checks back one spot at a time to see if any of the parking spots in the set $A_{i,k}:=\{a_i-1, a_i-2,\ldots, a_i-k\}\cap [n]$ are available. Note the intersection is present as cars cannot look back past the first parking spot. If any of the spots in $A_{i,k}$ are empty, then $c_i$ parks in the available spot $a_j\in A_{i,k}$ which is closest to its preferred parking spot $a_i$. 
If all of the parking spots in the set $A_{i,k}$ are occupied, then $c_i$ proceeds east until it reaches the first unoccupied parking spot after $a_i$. If under this new parking rule the parking preference $\alpha$ allows all cars to park, then we call $\alpha$ a $k$-Naples parking function of length $n$. 
We denote the set of all $k$-Naples parking functions of length $n$ by $PF_{n,k}$. 
We illustrate these definitions below.

\begin{example}
Consider the parking preference $(1,3,3,2)$. Notice that this parking preference is both a parking function and a Naples parking function. However, the order in which the cars park varies, depending on if we are using the classical parking rule or the Naples parking rule. According to the classical parking rule, we have that $c_1$ parks in the first space, $c_2$ in the third space, then $c_3$, finding the third space occupied, continues east and parks in the fourth space, and $c_4$ parks in its preferred second space. This is illustrated in Figure \ref{fig:cpf}. In contrast, according to the Naples parking function rule, we have that $c_1$ parks in the first space, $c_2$ in the third space, then $c_3$, finding the third space occupied, looks back a space and parks in the unoccupied second space. Finally, $c_4$ finds the second space occupied and continues east until it parks in the unoccupied fourth space. This is illustrated in Figure \ref{fig:npf}.
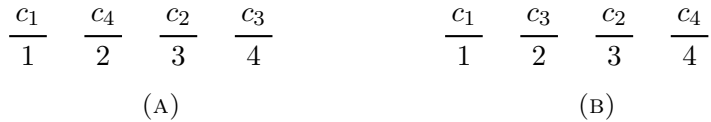
\begin{figure}[h]
\centering
\begin{subfigure}[b]{.25\textwidth}
\begin{tikzpicture}
\draw[ thick] (0,0) to node[below]{1} node[above]{$c_1$} (0.5,0);
\draw[ thick] (1,0) to node[below]{2} node[above]{$c_4$} (1.5,0);
\draw[ thick] (2,0) to node[below]{3} node[above]{$c_2$} (2.5,0);
\draw[ thick] (3,0) to node[below]{$4$} node[above]{$c_3$}(3.5,0);
\end{tikzpicture}
\caption{}
                \label{fig:cpf}
\end{subfigure}
\qquad\qquad
\begin{subfigure}[b]{.25\textwidth}
\begin{tikzpicture}
\draw[ thick] (6,0) to node[below]{1} node[above]{$c_1$} (6.5,0);
\draw[ thick] (7,0) to node[below]{2} node[above]{$c_3$} (7.5,0);
\draw[ thick] (8,0) to node[below]{3} node[above]{$c_2$} (8.5,0);
\draw[ thick] (9,0) to node[below]{$4$} node[above]{$c_4$}(9.5,0);
\end{tikzpicture}
\caption{}
                \label{fig:npf}
\end{subfigure}
\caption{Illustration of order in which cars with preference $(1,3,3,2)$ park under the classical parking rule (left) and under the Naples parking rule (right).}\label{fig:example1}
\end{figure}
\end{example}

We observe that for any parking preference of length $n$ there is a  maximum of $n-1$ steps backward that a car can take from its preferred parking space. Moreover, if each car can take up to $n-1$ steps backwards then each car is able to check each of the $n$ spaces and all the cars park. Namely, 
\begin{align}
    |PF_{n,k}|&=|PP_n|, \mbox{ whenever $k\geq n-1$}.\label{eq:pf=pp}
\end{align} 
In Table \ref{tab:PFnkchart}, we provide the cardinalities\footnote{Sequences in Table \ref{tab:PFnkchart} were computed using {\color{blue} \url{https://github.com/andresramos5/Naples-Parking-Function.git}}.} of the sets $PF_{n,k}$ for varying $k\leq n$. 

\begin{table}[h] 
    \centering
    \resizebox{\textwidth}{!}{
    \begin{tabular}{|c||c|c|c|c|c|c|c|c|}\hline
     $n$  & $k=0$ &$k = 1$ & $k = 2$ & $k = 3$ & $k = 4$ & $k = 5$ & $k=6$ & $k=7$\\
         \hline\hline
      1   & \textbf{1} &&&&&&&\\\hline
      2 &3 &\textbf{4} &&&&&&\\\hline
      3 & 16 & 24 & \textbf{27}& & & &&\\\hline
      4 & 125&203&240& \textbf{256} &&&&\\\hline
     5 & 1,296& 2,225& 2,731& 3,000& \textbf{3,125}&&&\\\hline
     6 & 16,807& 30,067& 38,034& 42,689& 45,360& \textbf{46,656}&&\\\hline
     7 & 262,144& 484,071& 627,405& 717,051& 773,081& 806,736 & \textbf{823,543}&\\\hline
     8 & 4,782,969 & 9,057,316 & 11,976,466 & 13,902,752 & 15,170,350 & 16,000,823 & 16,515,072 & \textbf{16,777,216}\\\hline
    \end{tabular}}
    \caption{The cardinality of $PF_{n,k}$. Numbers in \textbf{bold} are $n^n$, which count the cardinality of $PF_{n,k}$ for $k\geq n-1$. The first column, where $k=0$ is \mbox{$|PF_n|=(n+1)^{n-1}$ }.}
    \label{tab:PFnkchart}
\end{table}

From the sequences in Table \ref{tab:PFnkchart}, the On-line Encyclopedia of Integer Sequences (OEIS) only catalogs the sequences $(PF_{n,0})_{n\in \N}$ and $(PF_{n,n-1})_{n\in \N}$, which are the number of parking functions and the number of parking preferences, respectively. Thus, it appears that many of the sequences associated with $k$-Naples parking functions have not been studied. However, notice that the difference of the diagonal and subdiagonal in the table arising from the computation of \mbox{$|PF_{n,n-1}|-|PF_{n,n-2}|$} yields the sequence $1,3,16,125,\ldots$, which is precisely the number of parking functions. In fact there is a bijection between $PF^*_{n,n-1}=PF_{n,k}\setminus PF_{n,k-1}$ and $PF_{n-1}$, which we discuss in Theorem~\ref{thm:n-2NaplesCardinality}. As a consequence of this result, we establish a closed formula for the number of  $(n-2)$-Naples parking functions of length $n$, as presented in Corollary \ref{cor:closedformula1}.

First, in order to formally identify the bijection between $PF^*_{n,n-1}$ and $PF_{n-1}$, we begin with the following observation about the set $PF^*_{n,n-1}$ of parking preferences that are not $k$-Naples parking functions for $k<n-1$.

\begin{lemma}\label{lem:Usedinthetheorem}
If $\beta = (b_1, b_2, \dots, b_n) \in PF_{n,n-1}^*$, then $b_n = n$.
\end{lemma}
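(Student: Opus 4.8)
The plan is to pin down, under the $(n-2)$-Naples parking rule, exactly which car is capable of failing to park, and to show the only candidate is the last car $c_n$ with preference $n$. Recall that membership $\beta\in PF_{n,n-1}^*=PF_{n,n-1}\setminus PF_{n,n-2}$ means that $\beta$ is a parking preference (every preference lies in $PF_{n,n-1}$ by \eqref{eq:pf=pp}) which fails to be an $(n-2)$-Naples parking function. Equivalently, when the $(n-2)$-Naples rule is applied to $\beta$, at least one car is unable to park. So the goal is to argue that such a failure can occur only at $c_n$, and only when $b_n=n$.

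First I would record precisely which spots a car $c_i$ can reach when $k=n-2$. Such a car may back up as far as spot $b_i-(n-2)$ (truncated at $1$ by the intersection with $[n]$) and may also drive east as far as spot $n$, so the set of spots accessible to $c_i$ is the integer interval $[\max(1,\,b_i-(n-2)),\,n]$. The decisive dichotomy is the following: if $b_i\leq n-1$ then $b_i-(n-2)\leq 1$, so $c_i$ can reach every spot of $[1,n]$; whereas if $b_i=n$ then $b_i-(n-2)=2$, so $c_i$ can reach only the $n-1$ spots of $[2,n]$, with spot $1$ out of reach.

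Next I would use a pigeonhole count to locate the failing car. When $c_i$ arrives at most $i-1$ spots are occupied. If $b_i\leq n-1$, the accessible interval is all of $[1,n]$ while at most $i-1\leq n-1<n$ spots are full, so an accessible empty spot always remains and $c_i$ parks. If instead $b_i=n$ but $i<n$, then at most $i-1\leq n-2$ spots are occupied, whereas the accessible interval $[2,n]$ contains $n-1$ spots; since $n-2<n-1$, an accessible empty spot again remains and $c_i$ parks. Thus each of $c_1,\dots,c_{n-1}$ parks successfully no matter what its preference is, the parking process always reaches $c_n$, and the sole car that can fail is $c_n$ with $b_n=n$. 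Because $\beta$ does fail to be an $(n-2)$-Naples parking function, the failure must occur at $c_n$, which forces $b_n=n$.

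The step I expect to demand the most care is the bookkeeping in the counting argument. I must justify the bound ``at most $i-1$ spots occupied when $c_i$ arrives,'' which relies on having first shown that none of $c_1,\dots,c_{i-1}$ failed, so that the process genuinely advances one car at a time; and I must compare the number of occupied spots against the \emph{size of the accessible interval} rather than against $n$, since it is exactly the gap between $n-1$ (the reach of a car preferring $n$) and $n$ (the reach of any other car) that isolates $b_n=n$ as the only obstruction. Once the two cases $b_i\leq n-1$ and $b_i=n$ with $i<n$ are dispatched, the conclusion $b_n=n$ is immediate.
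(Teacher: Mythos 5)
Your proof is correct, and it takes a genuinely more self-contained route than the paper's. The paper argues by contradiction: assuming $b_n \neq n$, it uses the hypothesis $\beta \in PF_{n,n-1}$ to conclude that exactly one spot remains open when $c_n$ arrives, and then observes that a car preferring a spot at most $n-1$ can inspect every space behind it with only $n-2$ backward steps, so $c_n$ would still park under the $(n-2)$-Naples rule, giving $\beta \in PF_{n,n-2}$. That argument tacitly transfers the configuration produced by the $(n-1)$-rule run to the $(n-2)$-rule run (i.e., it assumes $c_1,\dots,c_{n-1}$ occupy the same spots under both rules), a step it does not spell out. Your proof sidesteps this entirely: you run only the $(n-2)$-Naples process and compare, for each car, the number of occupied spots against the size of its accessible interval $[\max(1,\,b_i-(n-2)),\,n]$, concluding by pigeonhole and induction that every car with preference at most $n-1$ parks, and that every car before $c_n$ parks even when its preference is $n$; hence the only possible failure is $c_n$ with $b_n = n$. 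Both proofs hinge on the same key computation --- a car preferring a spot $\leq n-1$ reaches all $n$ spots with $n-2$ back-steps, while a car preferring spot $n$ reaches only the $n-1$ spots of $[2,n]$ --- but your uniform per-car count buys rigor (no implicit comparison of two parking processes) and also makes transparent the stronger fact that under the $(n-2)$ rule a failure can only ever occur at the last car. You are also right that the hypothesis $\beta \in PF_{n,n-1}$ is automatic, since $PF_{n,n-1}=PP_n$ by Equation \eqref{eq:pf=pp}, whereas the paper uses that hypothesis as the engine of its contradiction.
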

\begin{proof}
 By way of contradiction, assume $\beta = (b_1, b_2, \dots, b_n) \in PF_{n,n-1}^*$ and $b_n \neq n$. All of the cars can park because $\beta$ is a $(n-1)$-Naples parking function. This implies that when you get to the last car, $c_n$, only one spot is open. By assumption, $c_n$'s preference satisfies $1\leq b_n \leq n-1$. If $c_n$ arrives to its preferred space and finds it occupied, it first checks backwards. The maximum number of steps back that $c_n$ can take is $(n-1) -1 = n-2 < n-1$. If $c_n$ takes $n-2$ steps back it has checked all the spaces behind its preferred space. Therefore, if the remaining empty space is behind $c_n$'s preferred space then $c_n$ finds it and parks there. If not, $c_n$ can move forward and check all the remaining spaces to find the empty one. Thus, $c_n$ can park with only $n-2$ steps back and $\beta \in  PF_{n,n-2}$. This contradicts our assumption that $\beta \in PF_{n,n-1}^*=PF_{n,n-1}\setminus PF_{n,n-2}$. Thus, $b_n = n$.
\end{proof}

Lemma \ref{lem:Usedinthetheorem} aids in establishing the following result.

\begin{theorem}\label{thm:n-2NaplesCardinality}
Let $\al= (a_1, a_2, \dots, a_{n-1}) \in PF_{n-1} $, and define $\Psi: PF_{n-1} \to PF_{n,n-1}^*$ by
\[
\Psi(\al) = (\psi(a_1), \dots, \psi(a_{n-1}), n),
\]
where $\psi(a_i) =n + 1 - a_i$. Then $\Psi$ is a bijection between $PF_{n-1}$ and $PF_{n,n-1}^*$. 
\end{theorem}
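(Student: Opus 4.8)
The plan is to exhibit a two-sided inverse. Define $\Phi\colon PF_{n,n-1}^*\to PP_{n-1}$ by $\Phi(\beta)=(n+1-b_1,\dots,n+1-b_{n-1})$, discarding the last coordinate. Since $a\mapsto n+1-a$ is injective, $\Psi$ is automatically injective, and one checks directly that $\Phi\circ\Psi=\mathrm{id}$ and $\Psi\circ\Phi=\mathrm{id}$ on the image (the latter using that Lemma~\ref{lem:Usedinthetheorem} forces $b_n=n$). Hence it suffices to prove two well-definedness statements: (i) $\alpha\in PF_{n-1}\Rightarrow\Psi(\alpha)\in PF_{n,n-1}^*$, and (ii) $\beta\in PF_{n,n-1}^*\Rightarrow\Phi(\beta)\in PF_{n-1}$. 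Throughout I would use \eqref{eq:pf=pp}, i.e. $PF_{n,n-1}=PP_n$, so that membership in $PF_{n,n-1}^*$ means exactly \emph{being a parking preference that fails to be an $(n-2)$-Naples parking function}.

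First I would reduce this membership to a single clean condition on the first $n-1$ cars. The preliminary observations are: (a) under the $(n-2)$-Naples rule every car $c_1,\dots,c_{n-1}$ parks, because when $c_j$ with $j<n$ arrives at least two spots are free while its backward-then-forward scan misses at most one spot (only spot $1$, and only if $b_j=n$), so the only car that can fail is $c_n$; (b) since $b_n=n$ by the Lemma, under $(n-2)$-Naples $c_n$ reaches every spot except spot $1$, so $c_n$ fails precisely when the unique spot left open by $c_1,\dots,c_{n-1}$ is spot~$1$; and (c) the $(n-2)$- and $(n-1)$-Naples search orders of any car differ only in whether the backward scan may reach spot $1$, which alters a car's outcome only if it would otherwise park in spot~$1$, so \emph{whenever spot $1$ is left open} the cars $c_1,\dots,c_{n-1}$ park identically under both rules. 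Combining (a)--(c): $\beta\in PF_{n,n-1}^*$ if and only if $b_n=n$ and $c_1,\dots,c_{n-1}$ occupy exactly $\{2,3,\dots,n\}$.

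The crux, which I expect to be the main obstacle because it is where the asymmetric backward/forward rule must be matched against classical parking, is a reflection principle. Apply the reflection $x\mapsto n+1-x$ to the spots: the $(n-1)$-Naples search order $b_i,\,b_i-1,\dots,1,\,b_i+1,\dots,n$ becomes $a_i,\,a_i+1,\dots,n,\,a_i-1,\dots,1$ with $a_i=n+1-b_i$, i.e. the reflected rule is ``drive to $a_i$, search \emph{east} to $n$, and only if everything east is full search \emph{west}''; leaving spot $1$ open becomes leaving spot $n$ open. I claim $c_1,\dots,c_{n-1}$ leave spot $n$ open under the reflected rule iff $(a_1,\dots,a_{n-1})\in PF_{n-1}$: if $(a_i)$ is a parking function of length $n-1$, all cars park inside $\{1,\dots,n-1\}$ using only eastward moves and never touch spot $n$; conversely, if $(a_i)$ is not a parking function, then under the forward-only rule some car overflows past $n-1$, and in the reflected $n$-spot process the \emph{first} such car parks in spot $n$, which is still empty at that instant (no earlier car reached it, so no earlier car could have moved west), so spot $n$ becomes occupied. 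Reading back in the original coordinates, this is exactly the equivalence needed.

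With the reflection principle the two directions close. For (i), if $\alpha\in PF_{n-1}$ then $b_i=n+1-a_i\in\{2,\dots,n\}$, the principle leaves spot $1$ open after $c_1,\dots,c_{n-1}$, and $c_n$ then parks in spot $1$ under $(n-1)$-Naples but fails under $(n-2)$-Naples, so $\Psi(\alpha)\in PF_{n,n-1}\setminus PF_{n,n-2}=PF_{n,n-1}^*$. For (ii), if $\beta\in PF_{n,n-1}^*$ then $b_n=n$ and spot $1$ stays open; no $b_i$ with $i<n$ can equal $1$ (such a car would itself seize the open spot $1$), so $a_i=n+1-b_i\in[n-1]$ and $\Phi(\beta)\in PP_{n-1}$, whereupon the principle upgrades ``spot $1$ open'' to $\Phi(\beta)\in PF_{n-1}$. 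Together with the inverse relations this gives the bijection. The only genuinely delicate points are the case bookkeeping in (a)--(c) and the ``first-overflow'' step of the reflection principle; everything else is formal.
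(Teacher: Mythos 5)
Your proof is correct, and its central device --- the reflection $x\mapsto n+1-x$ that turns the backward-first Naples search into the classical eastward search --- is the same one the paper uses (there it appears as a dual red/black labeling of the street, read east-to-west versus west-to-east). Where you genuinely diverge is in the overall decomposition, and your version proves more. The paper establishes only the forward inclusion $\alpha\in PF_{n-1}\Rightarrow\Psi(\alpha)\in PF_{n,n-1}^*$ by this reflection, and then deduces bijectivity from the single remark that $\psi$ is an involution; strictly speaking that gives injectivity and a candidate inverse, but surjectivity onto $PF_{n,n-1}^*$ still requires your direction (ii) --- that every $\beta\in PF_{n,n-1}^*$ has $(n+1-b_1,\dots,n+1-b_{n-1})\in PF_{n-1}$ --- and the paper never argues this (Lemma \ref{lem:Usedinthetheorem} only supplies $b_n=n$). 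Your proposal fills exactly that gap: the characterization in (a)--(c) (under the $(n-2)$-Naples rule only $c_n$ can fail, and it fails precisely when $b_n=n$ and spot $1$ is the one spot left open) together with the ``first overflow parks in spot $n$'' half of your reflection principle yields the converse. The cost is the extra case bookkeeping; the benefit is a complete two-sided verification rather than a one-directional argument plus an involution remark. One minor point: your claim (c) should be phrased as an induction on cars (the two parking processes coincide car by car so long as spot $1$ has not been taken), but as sketched it is sound.
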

\begin{proof}
Since $\alpha=(a_1, a_2, \dots, a_{n-1})$ is a parking function of length $n-1$, we have that $a_i\in [n-1] $ for all $i$ so that $\psi(a_i)\in [n]$. Thus, $\Psi(\alpha)\in PF_{n,n-1}=PP_n$. To verify that $\Psi(\alpha)\notin PF_{n,n-2}$, consider the setup outlined below.

Denote the $n-1$ cars with parking preferences given by $\alpha$ as $c_1,\dots, c_{n-1}$, and denote the $n$ cars with parking preferences given by $\Psi(\alpha)$ as $d_1,\dots,d_n$ in order to distinguish between the two. Now, consider $\Psi$ in the following way. For parking function $\alpha\in PF_{n-1}$ arrange for the car $c_i$ to park on a one-way street labeled 1 to $n$ from east to west, where they start driving from the eastern-most space labeled $1$ to their desired space $a_i$ and then proceed west if their desired space is occupied. See the red labeling of spaces in Figure \ref{fig:redblackspaces}. Thus, $c_i$ has parking preference $\psi(a_i)=n+1-a_i$ on a lot labeled 1 to $n$ from west to east.  See the black labeling of spaces in Figure \ref{fig:redblackspaces}.

\begin{figure}[h]
    \centering
    \begin{tikzpicture}[scale=1.5]
\draw[ thick] (0,0) to node[below]{1} (0.5,0);
\draw[ thick] (1,0) to node[below]{2} node[below=.5cm, red]{$n-1$} (1.5,0);
\draw[ thick] (2,0) to node[below]{3} node[below=.5cm, red]{$n-2$}(2.5,0);
\node at (3.25,0) {$\dots$};
\draw[ thick] (4,0) to node [below]{$n-2$} node[below=.5cm, red]{$3$}  (4.5,0);
\draw[ thick] (5,0) to node [below]{$n-1$} node[below=.5cm, red]{$2$}(5.5,0);
\draw[ thick] (6,0) to node [below]{$n$} node[below=.5cm, red]{$1$} (6.5,0);
\end{tikzpicture}
    \caption{Labeling the parking spaces in two distinct ways.}
    \label{fig:redblackspaces}
\end{figure}
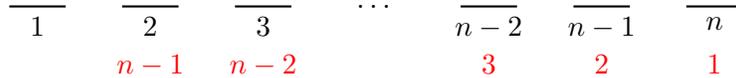

Since $\alpha\in PF_{n-1}$, the $n-1$ cars park in the red labeled spaces $1$ to $n-1$, moving from east to west, or the in black labeled spaces 2 to $n$, moving west to east. Moreover, car $d_i$ with parking preference $\psi(a_i)=n + 1 - a_i$ parks in precisely the same spot as $c_i$, whenever $1\leq i\leq n-1$, since $d_i$ proceeds to black space $n + 1 - a_i$, which is just red space $a_i$, and then proceeds west to the first available spot if it is unoccupied. Since $\alpha\in PF_{n-1}$, for any $1\leq i\leq n-1$ we know that both $c_i$ and $d_i$ never need to check a space further west than the black spot at position 2, i.e. the red spot at position $n-1$, so each car $d_i$ checks at most $n-2$ spaces behind its preferred spot. Thus, cars $d_1,\dots, d_{n-1}$ park in (black) spaces $2,\dots, n$. The car $d_n$ must have preference $n$ by Lemma~\ref{lem:Usedinthetheorem}, implying that the last preference of $\Psi(\alpha)$ is always $n$ 
and $\Psi(\alpha)\notin PF_{n,n-2}$. 

Next, observe that $\psi$ is an involution since 
\[
(\psi\circ\psi)(a_i)=n + 1 -(n + 1 - a_i)=a_i.
\]
Thus, $\Psi$ is invertible, which implies it is a bijection.
\end{proof}
Now we provide a closed formula for the number of $(n-2)$-Naples parking functions of length $n$. 
\begin{corollary}\label{cor:closedformula1}
If $n\geq 2$, then $|PF_{n, n-2}| = n^n - n^{n-2}$.
\end{corollary}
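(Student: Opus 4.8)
The plan is to derive the closed formula for $|PF_{n,n-2}|$ directly from the bijection established in Theorem~\ref{thm:n-2NaplesCardinality}. First I would recall the set-theoretic relationship that drives everything: by definition $PF_{n,n-1}^* = PF_{n,n-1}\setminus PF_{n,n-2}$, and since $PF_{n,n-2}\subseteq PF_{n,n-1}$ (this is the nesting $PF_{n,k-1}\subseteq PF_{n,k}$ stated in the introduction), these two sets partition $PF_{n,n-1}$. Taking cardinalities therefore gives
\[
|PF_{n,n-2}| = |PF_{n,n-1}| - |PF_{n,n-1}^*|.
\]

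The next step is to evaluate the two terms on the right. For the first, I would invoke Equation~\eqref{eq:pf=pp}, which asserts $|PF_{n,k}| = |PP_n| = n^n$ whenever $k\geq n-1$; in particular $|PF_{n,n-1}| = n^n$. For the second term, Theorem~\ref{thm:n-2NaplesCardinality} supplies a bijection $\Psi\colon PF_{n-1}\to PF_{n,n-1}^*$, so the two sets are equinumerous, giving $|PF_{n,n-1}^*| = |PF_{n-1}|$. Finally I would apply the classical enumeration $|PF_m| = (m+1)^{m-1}$ recalled in the introduction, with $m = n-1$, to conclude $|PF_{n,n-1}^*| = |PF_{n-1}| = n^{\,n-2}$.

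Substituting both values yields $|PF_{n,n-2}| = n^n - n^{n-2}$, as claimed. I would note the hypothesis $n\geq 2$ is exactly what is needed for Theorem~\ref{thm:n-2NaplesCardinality} to make sense, since it references $PF_{n-1}$ and the involution $\psi$ on $[n-1]$.

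There is no genuine obstacle here: the corollary is essentially immediate once the bijection is in hand, and the only thing requiring care is confirming the clean partition of $PF_{n,n-1}$ into $PF_{n,n-2}$ and $PF_{n,n-1}^*$ so that subtraction of cardinalities is justified. Thus the whole argument is a short three-line chain combining $|PF_{n,n-1}| = n^n$, the bijection $|PF_{n,n-1}^*| = |PF_{n-1}|$, and the formula $|PF_{n-1}| = n^{\,n-2}$.
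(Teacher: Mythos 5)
Your proof is correct and follows essentially the same route as the paper's: both arguments combine the bijection of Theorem~\ref{thm:n-2NaplesCardinality} (giving $|PF_{n,n-1}^*| = |PF_{n-1}| = n^{n-2}$) with the disjoint decomposition $PF_{n,n-1} = PF_{n,n-2} \cup PF_{n,n-1}^*$ and the fact that $|PF_{n,n-1}| = n^n$. Your additional remarks on why the subtraction is justified and why $n \geq 2$ is needed are accurate but merely make explicit what the paper leaves implicit.
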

\begin{proof}
By Theorem \ref{thm:n-2NaplesCardinality}, the set of $(n-1)$-Naples parking functions that are not $(n-2)$-Naples has cardinality $n^{n-2}= ((n-1) +1)^{(n-1)-1} =|PF_{n-1}|$. Moreover, since $PF_{n,n-2}$ and $PF_{n,n-1}^*$ are disjoint, we have that $$|PF_{n,n-2}|+|PF_{n,n-1}^*|=|PF_{n,n-1}|=n^n.$$
Therefore, $|PF_{n,n-2}|=n^n-n^{n-2}$ as desired.
\end{proof}

Having found closed formulas for $|PF_{n,n-1}|$ and $|PF_{n,n-2}|$,  
in the next section we present a recursive formula to count the number of $k$-Naples parking functions for all $1\leq k\leq n-3$.

\section{Counting Naples Parking Functions Recursively}\label{sec:RecursiveCount}
In this section, we begin by introducing a recursive formula for the number of parking functions, first appearing in the work of Konheim and Weiss \cite[Equation (2.4), Lemma 1]{KonheimAndWeiss}. For ease of reference, we provide an independent proof of this result and then generalize this recursion so that it counts $k$-Naples parking functions.

\begin{theorem} \label{Recursive PF} The number of parking functions of size $n+1$ is recursively counted by the formula
\begin{align*}
|PF_{n+1}|&=\sum_{i = 0}^n \binom{n}{i}(i+1)^{i}(n-i+1)^{n-i-1}.
\end{align*}
\end{theorem}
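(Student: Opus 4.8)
The plan is to recast the right-hand side so that it manifestly counts a product structure, and then to exhibit a bijection realizing that structure. Using the known count $|PF_m|=(m+1)^{m-1}$ (with the conventions $|PF_0|=1$ and the natural reading of the $i=0$ and $i=n$ boundary terms), one has $(i+1)^i=(i+1)\,|PF_i|$ and $(n-i+1)^{n-i-1}=|PF_{n-i}|$, so the claimed formula is equivalent to the convolution
$$|PF_{n+1}|=\sum_{i=0}^{n}\binom{n}{i}(i+1)\,|PF_i|\,|PF_{n-i}|.$$
I would prove this last identity combinatorially by decomposing a parking function of length $n+1$ according to the spot occupied by the final car $c_{n+1}$.

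Concretely, given $\alpha=(a_1,\dots,a_{n+1})\in PF_{n+1}$, I would run the parking process and let $i+1$ denote the unique spot in which $c_{n+1}$ parks, so that $i$ ranges over $\{0,1,\dots,n\}$. Since $c_{n+1}$ arrives last, spot $i+1$ is empty throughout the parking of $c_1,\dots,c_n$, and because cars only move east, $c_{n+1}$ can reach spot $i+1$ only if $a_{n+1}\le i+1$; this accounts for the factor $(i+1)$. I would then show that the first $n$ cars split cleanly into a left block and a right block: every car among $c_1,\dots,c_n$ that prefers a spot in $[i+1]$ must actually park in $\{1,\dots,i\}$, since otherwise it would reach the empty spot $i+1$ and park there. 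Hence these cars fill $\{1,\dots,i\}$ and form a parking function of length $i$, while the remaining cars prefer and fill $\{i+2,\dots,n+1\}$ and, after shifting labels down by $i+1$, form a parking function of length $n-i$. Recording which $i$ of the $n$ cars lie in the left block gives the factor $\binom{n}{i}$, and the two sub-parking-functions give $|PF_i|$ and $|PF_{n-i}|$.

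To finish, I would verify that this assignment is a bijection by describing the inverse: from a choice of an $i$-subset of $\{c_1,\dots,c_n\}$, a length-$i$ parking function on the left cars, a length-$(n-i)$ parking function on the right cars (shifted into $\{i+2,\dots,n+1\}$), and any value $a_{n+1}\in[i+1]$, one reconstructs a length-$(n+1)$ preference. Since the left cars occupy only $\{1,\dots,i\}$ and the right cars only $\{i+2,\dots,n+1\}$, the two blocks never interact, $c_{n+1}$ lands in spot $i+1$, and every car parks, so the reconstructed preference lies in $PF_{n+1}$ and maps back to the chosen data. Summing the product $\binom{n}{i}(i+1)|PF_i||PF_{n-i}|$ over $i$ then yields the formula. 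I expect the main obstacle to be exactly this structural step: rigorously justifying that spot $i+1$ stays empty and that the left and right blocks do not interfere, despite the arrival order of left and right cars being interleaved among $c_1,\dots,c_n$. The crucial point to isolate is that forward-only motion prevents any car preferring a spot $\ge i+2$ from ever touching the left block, while the persistent emptiness of spot $i+1$ confines every car preferring a spot $\le i+1$ to $\{1,\dots,i\}$; once this non-interaction is established, both the forward and inverse maps are well defined and the convolution identity follows.
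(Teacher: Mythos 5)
Your proposal is correct and takes essentially the same approach as the paper's proof: both decompose a parking function of length $n+1$ according to the spot $i+1$ in which the last car $c_{n+1}$ parks, producing the factors $\binom{n}{i}$ (choice of cars parking to the left), $|PF_i|$ and $|PF_{n-i}|$ (the two sub-parking-functions), and $i+1$ (preferences allowing $c_{n+1}$ to land in spot $i+1$). Your version merely spells out the non-interaction of the two blocks and the inverse map more explicitly than the paper does.
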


\begin{proof}
We proceed by counting the number of parking functions of length $n+1$ given that car $n+1$ can park in the spot $i+1$ for $i=0,1,2,\ldots,n$. Let $S\subseteq \{c_1,c_2,\ldots,c_n\}$ consist of the  cars parked to the left of the $i+1$ parking space, while the cars that park to the right of the $i+1$ spot consist of the complement of $S$. Observe that there are $\binom{n}{i}$ ways to select the subset $S$. The number of ways of assigning parking preferences to the cars in $S$ so that they park before spot $i+1$ is precisely $|PF_i|$. Now, we count the number of ways of assigning parking preferences to the $n-i$ cars found to the right of spot $i+1$ so that they park in the parking spots $i+2$ to $n+1$. This is given by $|PF_{(n+1) - (i + 1)}| = |PF_{n-i}|$ since the cars do not park in any of the first $i+1$ spots. Finally, there are $i+1$ possible parking preferences that allow $c_{n+1}$ to park in spot $i+1$. Thus, the number of parking functions of length $n+1$ where car $c_{n+1}$ parks in spot $i+1$ is given by $\binom{n}{i}|PF_i||PF_{n-i}|(i+1)$. Accounting for all possible values of $i$ yields 
\begin{align*}
|PF_{n+1}|& =  \sum_{i = 0}^n \binom{n}{i}|PF_{i}||PF_{n-i}|(i+1)= \sum_{i = 0}^n \binom{n}{i}(i+1)^{i}(n-i+1)^{n-i-1},
\end{align*}
as desired.
\end{proof}

Observe that in order to generalize the recursive formula in Theorem~\ref{Recursive PF} to count $k$-Naples parking functions, we need to modify it by taking into account the new rule that allows cars to look back up to $k$ spots in search for an available one. In this case, if we want car $c_{n+1}$ to park in spot $i+1$, then we must only count the number of parking preferences that allow $n-i$ cars to park in parking spots $i+2$ to $n+1$ without backing up to park in spot $i+1$. Equivalently, we consider introducing an empty parking spot, numbered $0$, to the left of $1$ and counting the number of $k$-Naples that would leave that spot open. We refer to this subset of $k$-Naples parking functions as contained parking functions.

\begin{definition}
The set of \textbf{contained parking functions} $B_{n,k}$ is the set of all $k$-Naples parking functions of length $n$ such that if cars $c_1, \ldots, c_{i-1}$ have already filled spaces $1,\dots,a_i$, then there is no car $c_i$ with a parking preference $1\leq a_i\leq k$.
\end{definition}
We call this set the contained parking functions because if you were to introduce more available spots to the ends of the parking lot (before the first spot and/or after the $n$th spot), the $n$ cars only park in spots $1,\dots,n$, assuming their parking preferences were between $1,\dots,n$. 

\begin{example}
We let the parking lot be represented by a number line of integers and consider the $2$-Naples parking function $\alpha = (4,4,2,3)$, whose cars park as depicted in Figure~\ref{fig:onnumberline}.
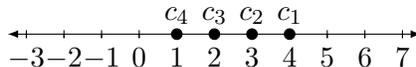
\begin{figure}[h]
    \centering
    \begin{tikzpicture}[scale=0.5]
\path [draw=black, fill=black] (1,0) circle (4pt);
\path [draw=black, fill=black] (2,0) circle (4pt);
\path [draw=black, fill=black] (3,0) circle (4pt);
\path [draw=black, fill=black] (4,0) circle (4pt);
\draw[latex-latex] (-3.5,0) -- (7.5,0) ;
\foreach \x in  {-3,-2,-1,0,1,2,3,4, 5, 6, 7}
\draw[shift={(\x,0)},color=black] (0pt,3pt) -- (0pt,-3pt);
\foreach \x in {-3,-2,-1,0,1,2,3,4, 5, 6, 7}
\draw[shift={(\x,0)},color=black] (0pt,0pt) -- (0pt,-3pt) node[below]
{$\x$};
\node[coordinate,label=above:{$c_4$}] at (1, 0) {};
\node[coordinate,label=above:{$c_3$}] at (2, 0) {};
\node[coordinate,label=above:{$c_2$}] at (3, 0) {};
\node[coordinate,label=above:{$c_1$}] at (4, 0) {};
\end{tikzpicture}
    \caption{Parking position of cars with parking preference $\alpha=(4,4,2,3)$.}
    \label{fig:onnumberline}
\end{figure}

If $\beta = (4,2,2,2)$, then the cars park as illustrated in Figure \ref{fig:numberline2}. \begin{figure}[h]
    \centering
    \begin{tikzpicture}[scale=0.5]
\path [draw=black, fill=black] (1,0) circle (4pt);
\path [draw=black, fill=black] (2,0) circle (4pt);
\path [draw=black, fill=black] (4,0) circle (4pt);
\path [draw=black, fill=black] (0,0) circle (4pt);
\draw[latex-latex] (-3.5,0) -- (7.5,0) ;
\foreach \x in  {-3,-2,-1,0,1,2,3,4, 5, 6, 7}
\draw[shift={(\x,0)},color=black] (0pt,3pt) -- (0pt,-3pt);
\foreach \x in {-3,-2,-1,0,1,2,3,4, 5, 6, 7}
\draw[shift={(\x,0)},color=black] (0pt,0pt) -- (0pt,-3pt) node[below]
{$\x$};
\node[coordinate,label=above:{$c_4$}] at (0, 0) {};
\node[coordinate,label=above:{$c_3$}] at (1, 0) {};
\node[coordinate,label=above:{$c_2$}] at (2, 0) {};
\node[coordinate,label=above:{$c_1$}] at (4, 0) {};
\end{tikzpicture}
    \caption{Parking position of cars with parking preference $\alpha=(4,2,2,2)$}
    \label{fig:numberline2}
\end{figure}
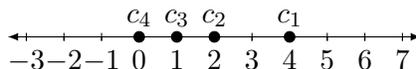

Hence $\beta = (4,2,2,2)\notin B_{4,2}$, because $c_4$ was able to look back past spot $1$ and park in spot $0$, leaving spot $3$ empty. Thus, the cars' final parking positions are not contained in spots 1 through 4. However, $\beta\in PF_{4,2}$, because under normal conditions it would not check any spot west of $1$ and car $c_4$ would park in spot $3$.
\end{example}

With these definitions in hand, we now determine the number of contained parking functions. Our proof adapts Pollak's technique to establish that $|PF_n|=(n+1)^{n-1}$ \cite{PollakCircleArgument}. 

\begin{lemma} \label{lem:BnkPFnequinumerous}
If $n\in N$ and $k \in \{0,1,\ldots, n\}$, then $|B_{n,k}|= (n+1)^{n-1}$.
\end{lemma}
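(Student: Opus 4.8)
The plan is to adapt Pollak's circular argument to the setting of contained parking functions. The key idea in Pollak's original proof that $|PF_n|=(n+1)^{n-1}$ is to add one extra parking spot and wrap the spots into a circle of $n+1$ positions; every preference vector in $\{0,1,\dots,n\}^{n}$ then allows all cars to park by driving around the circle, and exactly one of the $n+1$ cyclic shifts of any given vector leaves the extra spot empty, yielding $\tfrac{(n+1)^{n}}{n+1}=(n+1)^{n-1}$. First I would set up the circular model with $n+1$ spots labeled $0,1,\dots,n$ arranged in a cycle, where position $0$ plays the role of the extra spot, and declare that a car whose preferred spot is taken first looks back up to $k$ spots (cyclically) and then, failing that, proceeds forward (cyclically). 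The goal is to show that $B_{n,k}$ corresponds bijectively to those preference vectors whose parking outcome leaves spot $0$ empty.

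Next I would verify two facts. First, on the circle with $n+1$ spots and $n$ cars, \emph{every} preference vector $\alpha\in\{1,\dots,n\}^{n}$ lets all $n$ cars park (since there are more spots than cars and the circle has no dead end), so the total count of preference vectors is $n^{n}$ wrapped appropriately — more precisely I would work with vectors in $(\Z/(n+1)\Z)^n$, giving $(n+1)^n$ total. Second, and this is the crucial structural point, I would argue that the cyclic-shift action of $\Z/(n+1)\Z$ on preference vectors partitions all $(n+1)^n$ vectors into orbits, and that within each orbit exactly one shift leaves spot $0$ unoccupied. This requires showing that the map sending a vector to the empty spot in its final configuration is equivariant under cyclic shifts, i.e. shifting all preferences by $t$ (mod $n+1$) shifts the empty spot by $t$. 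The backward-looking rule must be checked to respect this cyclic symmetry, which it does precisely because on the circle the ``look back $k$, then go forward'' rule is itself translation-invariant.

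The final step is to connect the circular picture to the linear definition of $B_{n,k}$. I would argue that a vector leaves spot $0$ empty on the circle if and only if, read as a linear preference on spots $1,\dots,n$, it is a $k$-Naples parking function in which no car ever needs to ``wrap around'' past spot $0$ — equivalently, no car backs up into the phantom spot $0$. This is exactly the containment condition in the definition of $B_{n,k}$: the cars fill $1,\dots,n$ without any car parking west of spot $1$. Hence $|B_{n,k}|$ equals the number of orbit representatives leaving $0$ empty, which is $\tfrac{(n+1)^n}{n+1}=(n+1)^{n-1}$.

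The main obstacle I anticipate is the equivariance argument, namely showing that the backward-looking Naples rule interacts correctly with cyclic shifts so that ``exactly one shift per orbit leaves spot $0$ empty'' holds. In the classical case this is automatic because the forward-only rule is manifestly rotation-invariant; with the look-back rule one must confirm that allowing up to $k$ backward steps on the circle does not break the symmetry and, more delicately, that the contained condition (never stepping back past spot $1$ in the linear picture) matches exactly the condition of leaving spot $0$ empty on the circle. I would handle this by carefully tracking a single car's behavior under a shift and verifying that the induced permutation of final positions is itself a rotation, so that the counting argument goes through unchanged.
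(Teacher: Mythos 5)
Your proposal is correct and takes essentially the same route as the paper's own proof: both adapt Pollak's circle argument by introducing an extra spot $0$, observing that a preference lies in $B_{n,k}$ exactly when the circular Naples-style process leaves spot $0$ unoccupied, and counting one vector per rotation orbit to obtain $\frac{(n+1)^n}{n+1}=(n+1)^{n-1}$. If anything, you are more explicit than the paper about the two delicate points (rotation-equivariance of the empty spot, and the match between ``spot $0$ stays empty on the circle'' and the linear containment condition), which the paper's proof simply asserts.
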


\begin{proof}
Consider $\beta\in PP_n$. Each car can check up to $k$ spaces behind their preferred parking spot if it is occupied and only proceeds forward if all the spots they are allowed to check behind them are occupied. Let us arrange these parking spaces clockwise on a circle instead of on a line and introduce a space 0 between 1 and $n$. Now, if a car's preferred parking space is occupied, it checks up to $k$ spaces counterclockwise from its preferred parking space and proceeds clockwise if those spots are also occupied. Based on this set up, any parking preference of length $n$ allows all cars to park and leaves one space unoccupied. Observe that the parking preference is an element of $B_{n,k}$ if and only if the cars park in a way that leaves spot $0$ unoccupied.

To count the number of ways of assigning $n$ cars parking preferences on the circle, first count the number of ways to assign $n+1$ preferences to $n$ cars, which is $(n+1)^n$. For each parking preference, $\beta=(b_1,\dots,b_n)$ exactly one ``clockwise rotation'' of the wheel by an integer $j$, i.e. the parking preference $(b_1+j,\dots,b_n+j)\text{ }(\text{mod} (n+1))$, leaves the spot $n+1$ unoccupied. Thus, there are \mbox{$ \frac{(n+1)^n}{n+1}=(n+1)^{n-1}$} elements in $B_{n,k}$.
\end{proof}

Note that Lemma \ref{lem:BnkPFnequinumerous} implies that the sets $B_{n,k}$ and $PF_n$ are equinumerous. 
For clarity's sake, it is important to note that the argument used in the proof of Theorem \ref{lem:BnkPFnequinumerous} cannot be used to count $k$-Naples parking functions, because in addition to the contained $k$-Naples parking functions counted in this argument, there are parking functions with $k$ steps back that occupy spot $0$ on the circle. 
For example, $(1,1,1)\in PF_{3,1}$, but since cars first check spots behind their preferred parking spot, space 0 on the circle is occupied by the second car. Therefore, there are $k$-Naples parking functions that are not counted using this argument. Moreover, for small values of $n$ we found that not only are the sets \containedpf and \standardpf equinumerous,  but they also share specific characteristics. 
To describe these characteristics, we consider  $\alpha=(a_1,a_2,\ldots,a_n)\in PP_n$, and for all $1\leq i\leq n-1$, we~say 
\begin{itemize}
    \item $i$ is an ascent if $a_i<a_{i+1}$, 
    \item $i$ is a descent if $a_i>a_{i+1}$,  and 
    \item $i$ is a tie if $a_i=a_{i+1}$.
\end{itemize} 
Experimentally, the number of ascents, descents and ties in the set \containedpf are the same as the number of  ascents, descents and ties of $PF_n$, respectively. The enumeration of  descents and ties of parking functions was studied in \cite{pfdt}. These observations lead us naturally to the following open problem.

\begin{problem}
Find a bijection between $B_{n,k}$ and $PF_n$ that preserves the number of ascents, descents and ties. 
\end{problem}

We now use the set of contained parking functions to give a recursive formula for the number of $k$-Naples parking functions of length $n$.

\begin{reptheorem}{thm:mainrecurssion}
If $k,n\in \N$ with $0\leq k\leq n-1$, then the number of $k$-Naples parking functions of length $n+1$ is counted recursively by
\begin{align}
    |PF_{n+1,k}| =  \sum_{i=0}^{n} \binom{n}{i} \min((i+ 1) +k, n +1)|PF_{i,k}|(n-i+1)^{n-i-1}.\label{eq:mainrecform}
\end{align}
\end{reptheorem}
\begin{proof}
As in Theorem \ref{Recursive PF}, we now construct a recursion that counts the number of ways that $n+1$ cars can park given that car $c_{n+1}$ parks in the spot $i+1$ for $0 \leq i \leq n$. 
Let $S\subseteq \{c_1,c_2,\ldots,c_n\}$ consist of the  cars parked to the left of the $i+1$ parking space, while the cars that park to the right of the $i+1$ spot consist of the complement of $S$.
There are $\binom{n}{i}$ ways to choose the subset $S$. Given $S$, the number of ways of assigning parking preferences to the cars in $S$ which allow them to park in the first $i$ spaces is the number of $k$-Naples parking functions of length $i$, $|PF_{i,k}|$. 
Recall that spot $i+1$ must remain empty so that car $c_{n+1}$ can park there. Since cars can check up to $k$ spots behind their preferred parking spot, we must be careful to only count the parking preferences for cars in $\{c_1,\dots,c_n\}\setminus S$ which ensure that they do not park in spot $i+1$.
Fortunately, the set of parking preferences we just described is exactly $B_{n-i,k}$, and by Lemma \ref{lem:BnkPFnequinumerous}, we know that $|B_{n-i,k}| = (n-i+1)^{n-i-1}$. 
Lastly, we count how many possible parking preferences allow car $c_{n+1}$ to park in spot $i+1$. Since car $c_{n+1}$ can check up to $k$ spots behind its preferred spot, $a_{n+1}$, car $c_{n+1}$ parks in spot $i+1$ only if  
$1\leq a_{n+1}\leq i+1+k$. Also $i+1+k\leq n+1$, as there are only $n+1$ parking spots.
Thus, the number of ways of assigning a parking preference to $c_{n+1}$ so that it parks in spot $i+1$ is $\text{min}((i + 1) + k,n+1)$. The result follows from accounting for all possible values of $i$, which yields
\[|PF_{n+1,k}| =  \sum_{i=0}^{n} \binom{n}{i} \min((i+ 1) +k, n +1)|PF_{i,k}|(n-i+1)^{n-i-1}.\qedhere\]
\end{proof}

Evaluating Equation \eqref{eq:mainrecform} at $k=0$ recovers the recurrence from Theorem \ref{Recursive PF}.\\

In this section, we obtained a closed formula for the number of $k$-Naples parking functions length $n$ only in the special cases where $k=n-1$ or $n-2$ and provided a recursive formula for all other values of $0\leq k\leq n-3$. 
It remains an open problem to determine closed formulas $|PF_{n,k}|$, but as we discussed in the introduction, such a formula is beyond the scope of our current study. However, we note that \[|PF_{n,k}| = |PF_n| + |PF_{n,k}\setminus PF_n| = (n+1)^{n-1} + X,\] and, by Lemma \ref{lem:BnkPFnequinumerous}, we know that $|PF_n| = |B_{n,k}|$. Therefore, we can write $|PF_{n,k}| = |B_{n,k}| + |B_{n,k}^c|$ where $B_{n,k}^c$ is the complement of $B_{n,k}$ in $PP_n$. Thus, $|PF_{n,k}\setminus PF_n| = |B_{n,k}^c|$. Thus, finding a closed formula for $|B_{n,k}^c|$ is just as difficult as finding a closed formula for $|PF_{n,k}|$. This motivates our next open problem.

\begin{problem}
Find a closed formula to count the number of elements in $B_{n,k}^c$.
\end{problem}

\section{Characterization of Naples Parking Functions}\label{sec:TauCharacterization}

In this section, we specialize the parameter $k=1$ and focus our study on the set $PF_{n,1}$, i.e. the set of Naples parking functions of length $n$. 
The question of interest is: \textit{Given a parking preference, how can we determine if it is a Naples parking function? }
To determine whether a parking preference is a Naples parking function, we define a function which reduces the problem to checking if the image of a Naples parking function is a parking function.

\begin{definition}\label{TauDefinition}
Fix $n\in\N$ and let $\alpha = (a_1, a_2, \dots, a_n)\in PP_n$. 
We define $\Tau:PP_n\to PP_n$ as $\Tau(\alpha)=(\tau(a_1), \tau(a_2), \dots,\tau(a_n))$, where
\[
\tau(a_i) =\begin{cases} 
    a_i & \text{if  $i=1$, or if $a_i=1$, or if $a_i\neq 1$ and $a_i\neq \tau(a_j)$ for all $1\leq j<i\leq n$}\\
    a_i - 1 & \text{if $a_i\neq 1$ and $a_i = \tau(a_j)$ for some } 1\leq j < i\leq n.
   \end{cases}
\]
\end{definition}

We illustrate Definition \ref{TauDefinition} below.

\begin{example}
 Let $\al = (2,4,4,1) \in PF_{4,1}$. 
Note $\tau(a_1) = a_1 = 4$, as $i=1$. Notice that \mbox{$a_2 = 4\neq \tau(a_1)$}, so $\tau(a_2) = a_2 = 4$. Since $a_3 = 4\neq 1$ and $\tau(a_2)=a_3$ $\tau(a_3) = a_3-1=3$. Lastly,  $a_4=1$, so $\tau(a_4) = 1$. This establishes that $\Tau(\al) = (2,4,3,1)$. Note that $T(\alpha)\in PF_4$.
\end{example}

We are now ready to prove Theorem \ref{TheBigBoi}, which we restate below for ease of reference.

\begin{reptheorem}{TheBigBoi}
Fix $n\in\N$ and let $\al$ be a parking preference. Then $\alpha$ is a Naples parking function if and only if $\Tau(\alpha)$ is a parking function.
\end{reptheorem}
\begin{proof}
We first show that if $\alpha \in PF_{n,1}$, then $ \Tau(\alpha) \in PF_n$. Suppose $\alpha = (a_1, a_2, \dots, a_n) \in PF_{n,1}$ and $\Tau(\alpha)= (b_1, b_2, \dots, b_n) $. By Definition \ref{TauDefinition}, we know that for each $1\leq i\leq n$, $b_i = a_i$ or $b_i = a_i - 1$. In particular, if some car $c_i$ has preferred spot $a_i$ and that spot is taken by some car $c_j$, with $1\leq j < i$, then $b_i = a_i -1$. Otherwise, we have $b_i=a_i$. 
Since $c_i$ can park using the Naples parking rule, then there exists a spot $q$ with $a_i -1\leq b_i \leq q\leq n$ that is unoccupied. In other words, there must be an empty spot somewhere between spots $a_{i}-1$ and $n$ in order for $c_i$ to park. Because $ a_{i}-1\leq b_i$, the new preference $b_i$ ensures that the car finds an empty spot to park in, which is either at position $b_i$ or somewhere ahead of it. Thus, $c_i$ is able to park using the original parking rule. Since $i$ is arbitrary, each car $c_i$ with preference $b_i$ can park for $1\leq i \leq n$ using the original parking rule.

To show that $\Tau(\alpha) \in PF_n$ implies  $\alpha \in PF_{n,1} $ we prove the contrapositive. That is, if $\alpha \notin PF_{n,1}$ then $\Tau(\alpha) \notin PF_n$. Let $ \Tau(\alpha)=(b_1, b_2, \dots, b_n)$, where $\alpha = (a_1, a_2, \dots, a_n) \notin PF_{n,1}$. As above, $b_i = a_i$ or $b_i = a_i - 1$. Since $\alpha\notin PF_{n,1}$ there exists a car $c_i$ that cannot park using the Naples parking rule. That means that there does not exists an available spot $q$ satisfying $a_i-1\leq b_i\leq q\leq n$. Moreover, since none of these spots are available for parking using the Naples parking rule, they are also not available when parking using the original parking rule. Thus, $\Tau(\alpha)\notin PF_n$. 
\end{proof}

With the complete characterization of Naples parking functions complete, we now study their connection to Dyck paths.

\section{Connections to Decreasing Lattice Paths}\label{sec:DecreasingLatticePaths}

In this section, we introduce $k$-Lattice paths, a generalization of Dyck paths, and give a bijection between these objects and decreasing $k$-Naples parking functions. 
This result exploits the classical result which gives a correspondence between Dyck paths and decreasing\footnote{The original proof considers increasing parking functions, but the bijection holds under a slight change of indices for the decreasing parking functions.} parking functions. We end the section by connecting our main result, Theorem \ref{DecreasingNaplesiffDyck}, to signature Catalan objects, as presented in the work of Cellabos and Gonz\'alez D'Le\'on  \cite{D'LeonandCeballos}. 

In what follows, we consider decreasing rearrangements of $k$-Naples parking functions, as increasing rearrangements of $k$-Naples parking functions are not necessarily $k$-Naples. For example, $(4,1,4,3),(4,4,3,1)\in PF_{4,1}$, but $(1,3,4,4) \not\in PF_{4,1}$. 
Therefore, it is more natural to consider Dyck paths drawn from $(0,n)$ to $(n,0)$ using east and south steps. We present our formal definition below.

\begin{definition}\label{DyckPath}
For a given $n\in\N$, a \textbf{Dyck path} of length $2n$ is a lattice path from $(0,n)$ to $(n,0)$ consisting of $n$ steps by $(1,0)$ east and $n$ steps by $(0,-1)$ south such that the path never goes above the line $y=n-x$. For any south step, the number of south steps proceeding it is larger than the number of east steps preceding it. We denote the set of all Dyck paths of length $2n$ as $LP_{n}$.
\end{definition}

We now describe the bijection between decreasing parking functions and Dyck paths. Recall that a decreasing parking function is one whose entries are written in weakly-decreasing order.
Specifically, if $\alpha=(a_1,\ldots,a_n)\in PF_n$ is a decreasing parking function, then the corresponding lattice path has an east step $(i-1,a_i-1)$ to $(i,a_i-1)$ at height $a_i-1$ for each $1\leq i\leq n$, and south steps connecting these east steps so that the result is a connected path from $(0,n)$ to $(n,0)$. The fact that $\alpha$ is a decreasing parking function implies that $a_i\leq n-i+1$, hence the corresponding lattice path does not cross the line $y=n-x$.

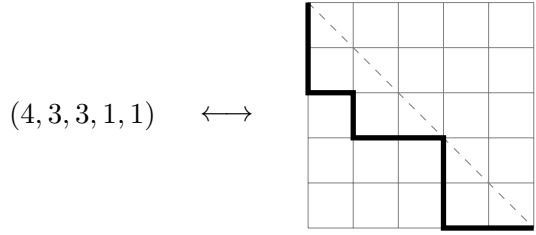
\begin{figure}[h]
    \centering
    \begin{tikzpicture}[scale=0.6]
    \node at (-5,2.5) {$(4,3,3,1,1)$};
    \node at (-1.8,2.5) {$\longleftrightarrow$};
\draw [help lines] (0,0) grid (5,5);
\draw[gray,dashed] (0,5) to (5,0);
\draw[line width=2pt] (0,5) to (0,3) to (1,3) to (1,2) to (3,2) to (3,0) to (5,0);
\end{tikzpicture}
    \caption{Dyck path corresponding to $\alpha=(4,3,3,1,1)$.}
    \label{fig:exDtoPF}
\end{figure}

\begin{example} 
Let $\alpha=(4,3,3,1,1)$ and note that its associated Dyck path has one east step at height 3, two east steps at height 2, and two east steps at height 0. Figure \ref{fig:exDtoPF} illustrates the corresponding Dyck path for $\alpha$.
\end{example}

We now consider a generalization of Dyck paths, which we call $k$-lattice paths. 

\begin{definition}
If  $n,k\in \N$ with $0\leq k\leq n-1$, then a {\bf $k$-lattice path} of length $2n$ is a lattice path from $(0,n)$ to $(n,0)$ consisting of $n$ steps east by $(1,0)$ and $n$ steps south by $(0,-1)$ such that the path does not cross the line $y=n-x+k$. We denote the set of all $k$-lattice paths of length $2n$ as $LP_{n,k}$.
\end{definition}
Notice that, $LP_{n,0}=LP_n$, which is the set of Dyck paths of length $2n$. Thus, $k$-lattice paths are a generalization of Dyck paths. Next, we present our main result, which establishes a bijection between decreasing $k$-Naples parking functions and $k$-lattice paths. Since it is well-known that there is a bijection between $LP_n$ and decreasing parking functions of length $n$, in what follows, we only consider the case where $1\leq k\leq n-1$.

\begin{reptheorem}{DecreasingNaplesiffDyck}
If $n,k\in \N$ with $1\leq k\leq n-1$, then the set of decreasing $k$-Naples parking functions of length $n$ and the set of $k$-lattice paths of length $2n$ are in bijection.
\end{reptheorem}
\begin{proof}
To establish this result, it suffices to show that given a decreasing $k$-Naples parking function we can construct a $k$-lattice path, and given a $k$-lattice path there is a corresponding decreasing $k$-Naples parking function.

Suppose that $\alpha=(a_1,a_2,\dots, a_n)$ is a decreasing $k$-Naples parking function. As in the parking function case, from $\alpha$ we construct the $k$-lattice path of length $2n$ with east steps $(i-1,a_i-1)$ to $(i,a_i-1)$, which we denote $LP(\alpha)$. 

We need only show that $a_i\leq \min(n,n+k+1-i)$ holds for all $1\leq i\leq n$, as this implies that $LP(\alpha)$ is a $k$-lattice path. Suppose that there is some $i$ such that $a_i>\min(n,n+k+1-i)$ to obtain a contradiction. For $i=1,2, \dots, k+1$, note that $\min(n,n+(k+1)-i)=n$, and because $\alpha\in PF_{n,k}$ it cannot be that  $a_i>n$. 

On the other hand, if $k+1< i\leq n$, then 
$\min(n,n+k+1-i)=n+k+1-i$, and lets assume that $a_i>n+k+1-i$. Since $\alpha$ is in decreasing order we know that $a_j\geq a_i>n+k+1-i$ for all $1\leq j<i$. 
The most optimal parking preference is $a_j=n+k+2-i$ for all $1\leq j<i$, as this maximizes the number of parking positions cars $c_1,\ldots,c_j$ can occupy. That is, it leaves the most open spots to the right of position $n+k+2-i$. In this case, cars $c_1,c_2,\ldots,c_{k+1}$ park in positions $n+k+2-i,n+k+1-i,\ldots,n+2-i$, respectively. Then, cars $c_{k+2}, c_{k+3},\ldots, c_{i-1}$ first go to their preferred parking spot, namely $n+k+2-i$, finding it occupied they back up and all of the $k$ prior spots are also full. Thus, these cars go forward and occupy the last $i-k-2$ spots numbered $n+k+3-i$ to $n$. Then car $c_i$, arriving to its preferred position, again $n+k+2-i$, finding it occupied backs up and also finds all $k$ spots behind full. It then moves forward and finds all remaining spots taken. Thus, $c_i$ is unable to park contradicting the assumption that $\alpha\in PF_{n,k}$.

We now go from an arbitrary $k$-lattice path to a decreasing $k$-Naples parking function. Given a lattice path $L\in LP_{n,k}$, we know this path starts at $(0,n)$, ends at $(n,0)$, and stays below the line $y=n-x+k$. Suppose the east steps of $L$ occur from $(i-1,a_i-1)$ to $(i,a_i-1)$, then by definition $a_i\leq \min(n,n+k+1-i)$ for all $1\leq i\leq n$. Construct the parking preference $\alpha=(a_1,a_2,\dots,a_n)$. 
Note that the construction of $\alpha$ guarantees that $\alpha$ is in decreasing order.
It remains for us to show that $\alpha\in PF_{n,k}$. That is, we check that for all $i\in [n]$, car $c_i$ can park under the $k$-Naples parking rules.

First, observe that the first $k+1$ cars can always park, since they can back up to $k$ positions, see Equation \eqref{eq:pf=pp}. 
Now for $i>k+1$ we split into two cases:
    $a_i\leq k$ and 
$a_i>k$.\\

\noindent
\emph{Case 1:} Suppose $i>k+1$ and $a_i\leq k$. If one of the spots between $1$ and $k$ is unoccupied, then $c_i$ parks there. Instead, if all of the parking spots between 1 and $k$ are occupied, this means that of the $i-1$ cars that have parked, $k$ of them have occupied the first $k$ spots, while the remaining $i-(k+1)$ cars occupied some spots numbered between $k+1$ and $n$. Thus, there are less cars parked to the right of spot $k$, than there are parking spots between $k+1$ and $n$.  
Thus, $c_i$ parks in the leftmost available spot between $k+1$ and $n$.\\

\noindent
\emph{Case 2:} Suppose $i>k+1$ and $a_i>k$. If spot $a_i$ or a spot up to $k$ steps behind is unoccupied then $c_i$ can park. 
Otherwise, spots $a_i-k,a_{i}-k+1,\ldots , a_i$ are occupied. Now, since $a_i\leq n+k+1-i$, we have that there are  $n-a_i\geq n-(n+k+1-i)=i-(k+1)$ spots to the right of $a_i$. By assumption, spots $a_i-k$ through $a_i$ are occupied by $k+1$ of the $i - (k+1)$ cars that parked before $c_i$ so that $i-(k+2)$ cars have parked in the at least $i-(k+2)$ spots after $a_i$. This leaves a remaining open spot between $a_{i+1}$ and $n$ in which $c_i$ can park. 
Since $i$ was arbitrary, we have established that all cars $c_i$ can park for all $1\leq i\leq n$. Thus, $\alpha \in PF_{n,k}$. 
\end{proof}

Now we provide a connection between $k$-lattice paths and signature Dyck paths. In a recent paper by Cellabos and Gonz\'alez D'Le\'on  \cite{D'LeonandCeballos}, they introduce the concept of signature Dyck paths, defined by a vector $s=(s_1,s_2,\ldots,s_\ell)\in \N^\ell$. The signature $s$ defines a ribbon and an $s$-Dyck path is a lattice path that lies on or above the ribbon.
To describe the ribbon, we construct a grid of dimensions $\ell\times [(\sum_{i=1}^{\ell}s_i-1)+1]$. We number the boxes in this grid from bottom to top, calling each row a level $i$ with $1\leq i\leq \ell$, and on each level we number the boxes left to right from $1$ to $(\sum_{i=1}^{\ell}s_i-1)+1$.
At each level we shade a specific set of boxes. Begin by shading the boxes $1,2,\ldots,s_1$ at level $1$. Then, for $2\leq i\leq \ell$, shade the boxes numbered $(\sum_{j=1}^{i-1}s_{j}-1)+1$ to $(\sum_{j=1}^{i}s_{j}-1)+1$ at level $i$. Figure \ref{fig:exsd}, illustrates the ribbon when $s=(3,2,5,1,1)$, along with an $s$-Dyck path in blue, and a lattice path that is not an $s$-Dyck path in red.

\begin{figure}[h]
    \begin{subfigure}[b]{0.3\textwidth}
    \[
    \begin{tikzpicture}[scale=0.55]
		\draw[step=1cm] (0,0) grid (8,5);
		\foreach \i in {1,...,8}{
			\node at (-0.5+\i,-0.5) {$\i$};
		}
		\foreach \j in {1,...,5}{
			\node at (-0.5,-0.5+\j) {$\j$};
		}
		\draw[pattern=north west lines, pattern color=black] (0,0) rectangle (3,1);
		\draw[pattern=north west lines, pattern color=black] (2,1) rectangle 	(4,2);
		\draw[pattern=north west lines, pattern color=black] (3,2) rectangle (8,3);
		\draw[pattern=north west lines, pattern color=black] (7,3) rectangle (8,5);
	\end{tikzpicture}\]
	\caption{Ribbon corresponding to $s$}
	\end{subfigure}
    \hfill
    \begin{subfigure}[b]{0.3\textwidth}
    \[
    \begin{tikzpicture}[scale=0.55]
		\draw[step=1cm] (0,0) grid (8,5);
		\foreach \i in {1,...,8}{
			\node at (-0.5+\i,-0.5) {$\i$};
		}
		\foreach \j in {1,...,5}{
			\node at (-0.5,-0.5+\j) {$\j$};
		}
		\draw[pattern=north west lines, pattern color=black] (0,0) rectangle (3,1);
		\draw[pattern=north west lines, pattern color=black] (2,1) rectangle (4,2);
		\draw[pattern=north west lines, pattern color=black] (3,2) rectangle (8,3);
		\draw[pattern=north west lines, pattern color=black] (7,3) rectangle (8,5);
		\draw[line width=3pt, blue] (0,0) to (0,2) to(3,2) to (3,3) to (5,3) to (5,4) to (7,4) to (7,5) to (8,5);
	\end{tikzpicture}
    \]
    \caption{An $s$-Dyck path}
    \end{subfigure}
    \hfill
    \begin{subfigure}[b]{0.3\textwidth}
    \[\begin{tikzpicture}[scale=0.55]
		\draw[step=1cm] (0,0) grid (8,5);
		\foreach \i in {1,...,8}{
			\node at (-0.5+\i,-0.5) {$\i$};
		}
		\foreach \j in {1,...,5}{
			\node at (-0.5,-0.5+\j) {$\j$};
		}
		\draw[pattern=north west lines, pattern color=black] (0,0) rectangle (3,1);
		\draw[pattern=north west lines, pattern color=black] (2,1) rectangle (4,2);
		\draw[pattern=north west lines, pattern color=black] (3,2) rectangle (8,3);
		\draw[pattern=north west lines, pattern color=black] (7,3) rectangle (8,5);
		\draw[line width=3pt, red] (0,0) to (0,1) to(3,1) to (3,3) to (5,3) to (5,4) to (7,4) to (7,5) to (8,5);
	\end{tikzpicture}\]
    \caption{Not an $s$-Dyck path}
    \end{subfigure}
    \caption{The ribbon corresponding to the signature $s=(3,2,5,1,1)$, and two lattice paths: one an $s$-Dyck path (blue path) and that is not an $s$-Dyck path (red path).}
    \label{fig:exsd}
\end{figure}
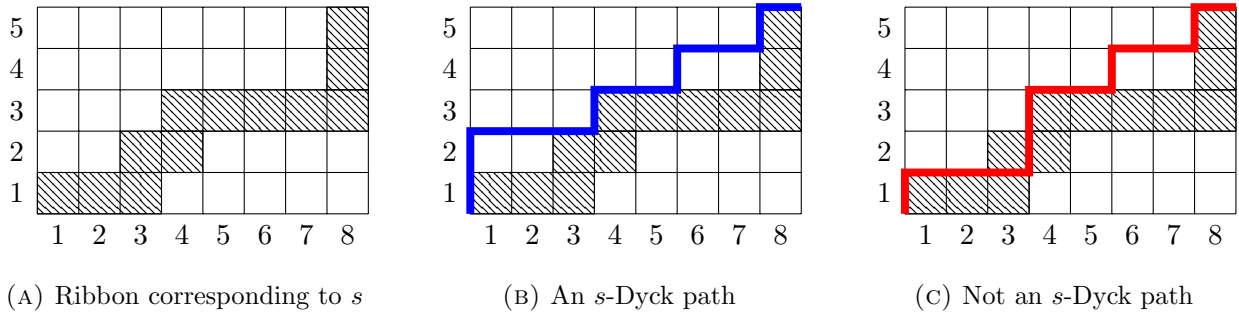

In our work, we consider a horizontal reflection of $s$-Dyck paths so that our paths are decreasing, rather than increasing. In this way, $k$-lattice paths of length $2n$ are $s$-Dyck paths with signature 
\begin{align}
    s = (k+1, \underbrace{2, 2, \dots, 2}_{n-k}, \underbrace{1, 1, \dots, 1}_{k})\label{eq:sigs}
\end{align}
of length $2(n+1).$ In Figure \ref{fig:exofasig}, we illustrate the signature for a $3$-lattice path of length $14$, and note that any lattice path begins with a south step from $(0,7)$ to $(0,6)$ and ends with an east step from $(6,0)$ to $(7,0)$.

\begin{figure}[h]
    \centering
	\[
	\begin{tikzpicture}[scale=0.6]
		\draw[step=1cm] (0,0) grid (7,7);
		\draw[pattern=north west lines, pattern color=black] (0,7) rectangle (4,6);
		\draw[pattern=north west lines, pattern color=black] (3,6) rectangle (5,5);
		\draw[pattern=north west lines, pattern color=black] (4,5) rectangle (6,4);
		\draw[pattern=north west lines, pattern color=black] (5,4) rectangle (7,3);
		\draw[pattern=north west lines, pattern color=black] (6,3) rectangle (7,0);
		\draw[red, line width=3pt] (1.5,7.5) to (7.5,1.5);
	\end{tikzpicture}
	\]  
    \caption{Illustrating the possible locations for $3$-lattice paths of length $12$, which begin at $(0,6)$, end at $(6,0)$, and must lie below the red line given by $y=6-x+3$. This corresponds to $s$-Dyck paths of length $14$, which begin at $(0,7)$, end at $(7,0)$, and must lie on or below the signature $s=(4,2,2,2,1,1,1)$.}
    \label{fig:exofasig}
\end{figure}
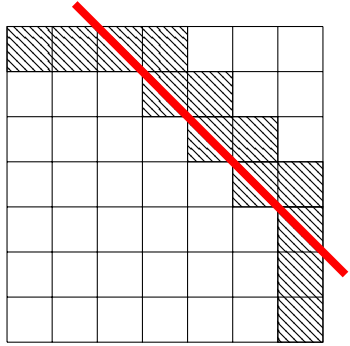

 \begin{table} 
    \begin{tabular}{|c|l|l|l|}\hline
     Subset of $k$-Naples parking functions & OEIS & Sequence & Formula\\
         \hline\hline
      $|PF_{n,1}^d|$ with $n\geq 2$ &\textcolor{blue}{\href{http://oeis.org/A000245}{A000245}}& $3, 9, 28, 90, 297,\ldots$& $ \frac{3(2n)!}{(n+2)!(n-1)!}$\\\hline
      $|PF_{n,2}^d|$ with $n\geq 3$ &\textcolor{blue}{\href{http://oeis.org/A026016}{A026016}}& $10, 34, 117, 407,\ldots$ & $\binom{2(n-1)}{n} - \binom{2(n-1)}{(n+3)}$\\\hline
      $|PF_{n,3}^d|$ with $n\geq 4$ &\textcolor{blue}{\href{http://oeis.org/A026026}{A026026}}& $35, 125, 451, 1638,\ldots$ & $ \binom{2n-1}{n-1}-\binom{2n-1}{n-5}$ \\\hline
    \end{tabular}
\caption{Known integer sequences related to enumerating decreasing $k$-Naples parking functions, which we denote as $PF_{n,k}^d$.
}\label{tab:specialcases}
\end{table}

Theorem \ref{DecreasingNaplesiffDyck} along with the resulting sequences in Table \ref{tab:specialcases} give formulas for the number of $s$-Dyck paths with signatures as given in Equation \eqref{eq:sigs}, for the special cases $k=1$, $2$, and $3$. We note that formulas for other values of $k$ are unknown.

\subsection{Rearrangements of $k$-Naples parking functions}
We begin by illustrating that not all rearrangements of $k$-Naples parking functions are $k$-Naples parking functions.

\begin{example}\label{k-naples example}
Let $\alpha = (7,7,7,7,5,2,2)\in PP_{7}$. We now verify that $\alpha$ is an element of $PF_{7,3}$. First, $c_1$ parks in its preferred parking spot $7$. Then, $c_2$ backs up one space and parks in position $6$, $c_3$ backs up two spaces and parks in position $5$, $c_4$ backs up three spaces and parks in position $4$, $c_5$ backs up two spaces and parks in position $3$. 
Next, $c_6$ parks in its preferred parking space $2$, while the last car, $c_7$, has to back up one space to park in position $1$. 
The filled parking lot based on $\alpha$ is illustrated in Figure \ref{fig:npf2}.

\begin{figure}[h]
    \centering
\begin{tikzpicture}
\draw[thick] (0,0) to node[below]{1} node[above]{$c_7$} (0.5,0);
\draw[thick] (1,0) to node[below]{2} node[above]{$c_6$}(1.5,0);
\draw[thick] (2,0) to node[below]{3} node[above]{$c_5$} (2.5,0); 
\draw[thick] (3,0) to node[below]{4} node[above]{$c_4$}(3.5,0);
\draw[thick] (4,0) to node[below]{5} node[above]{$c_3$}(4.5,0);
\draw[thick] (5,0) to node[below]{6} node[above]{$c_2$} (5.5,0);
\draw[thick] (6,0) to node[below]{7} node[above]{$c_1$}(6.5,0);
\end{tikzpicture}    
    \caption{Illustrating the parking order for the $3$-Naples parking function $(7,7,7,7,5,2,2)$.}
    \label{fig:npf2}
\end{figure}
Now, notice that in the rearrangement $\beta=(5,7,7,7,7,2,2)$ of $\alpha$, $c_1$ parks at position 5, $c_2$ parks at position 7, $c_3$ backs up one space and parks at position 6, and $c_4$ backs up three spaces and parks at position $4$. Now $c_5$ finds its preferred space and the three preceding occupied. Additionally, when it checks forward, there are no available spaces and $c_5$ cannot park. Thus, $\beta\notin PF_{7,3}$. 
\end{example}

Characterizing when a rearrangement of a $k$-Naples parking function is another $k$-Naples parking function remains an open problem.
We state this formally below.

\begin{problem} 
Characterize and enumerate which rearrangements of decreasing $k$-Naples parking functions are also elements of $PF_{n,k}$.
\end{problem}

In the case where $k=1$ we conjecture the following.

\begin{conjecture}\label{OneCornerAboveLine}
If there is only one corner above the line $y= n-x$ then that parking preference and all of its rearrangements are elements of $PF_{n,1}$.
\end{conjecture}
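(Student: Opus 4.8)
The plan is to reformulate the conjecture in terms of the lattice path picture from Theorem~\ref{DecreasingNaplesiffDyck} and then verify the $k=1$ parking rule directly. First I would make the statement precise: a decreasing parking preference $\alpha=(a_1,\dots,a_n)$ corresponds to the lattice path $LP(\alpha)$ with east steps at heights $a_i-1$, and a ``corner above the line $y=n-x$'' means an index $i$ with $a_i > n-i+1$, equivalently $a_i = n-i+2$ (since one corner above the line by exactly one unit is the only way to stay a $1$-Naples path). So the hypothesis should be read as: there is exactly one index $i_0$ with $a_{i_0}-1 > n-i_0$, and at that index the path pokes above the Dyck line by a single unit. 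The goal splits into two parts: (i) show such an $\alpha$ lies in $PF_{n,1}$, and (ii) show \emph{every} rearrangement of $\alpha$ also lies in $PF_{n,1}$.

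For part (i), I would apply Theorem~\ref{TheBigBoi}: $\alpha\in PF_{n,1}$ iff $\Tau(\alpha)\in PF_n$. The single corner above the line means $\alpha$ fails the parking-function condition $b_i\le i$ in exactly one place and only by one unit. The map $\Tau$ decrements a repeated preferred value by one, and the content of the argument is that the ``$-1$'' correction applied by $\tau$ precisely cancels the single unit of excess, pushing the offending entry back onto or below the line $y=n-x$, so that $\Tau(\alpha)$ satisfies $b_i\le i$ for all $i$ and hence is a genuine parking function. I would verify this by examining the increasing rearrangement of $\Tau(\alpha)$ and checking the Pollak-style inequality coordinate by coordinate, using that only one coordinate was in violation and that it was in violation by exactly one.

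For part (ii), the rearrangement-invariance, I would argue that membership in $PF_{n,1}$ for this special family depends only on the \emph{multiset} of preferences, not their order. The cleanest route is to show that for a preference list with a single corner above the Dyck line, the $1$-Naples parking process always succeeds regardless of the order cars arrive: at the moment the ``extra'' car arrives and finds its preferred spot taken, the back-up-one rule supplies exactly the one additional spot of slack needed, and because there is only a single unit of deficiency in the whole configuration, there is always a free spot within reach (either one step back or somewhere ahead). I would make this an occupancy/counting argument in the spirit of Case~2 in the proof of Theorem~\ref{DecreasingNaplesiffDyck}: track, as each car parks, the number of occupied spots in each prefix $\{1,\dots,j\}$, and show the single-corner hypothesis forces the prefix-occupancy deficit to never exceed what one backward step can repair.

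The hard part will be part (ii): establishing order-independence rigorously. Unlike classical parking functions, $k$-Naples parking functions are genuinely \emph{not} rearrangement-invariant in general (Example~\ref{k-naples example} shows this), so the argument must use the single-corner hypothesis in an essential way and cannot be a soft symmetry argument. The main obstacle is controlling the interaction between the backward-looking rule and an unfavorable arrival order: I expect the crux is to prove that with only one unit of total excess above the Dyck line, no arrival order can create a ``cascade'' in which a backed-up car displaces the one free spot that a later car needs. I would likely formalize this via an invariant—e.g., that throughout the process the set of occupied spots differs from an initial segment by at most one ``hole'' that is always within one step of the next arriving car's reach—and prove the invariant is preserved at each step by a short case analysis on whether the arriving car parks at its spot, one step back, or ahead.
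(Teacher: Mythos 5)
First, a point about comparison: the paper offers \emph{no} proof of this statement. It appears as Conjecture \ref{OneCornerAboveLine} and is left open, so your proposal must stand on its own rather than be measured against a proof of record. Second, your decision to read the hypothesis as ``the path is a $1$-lattice path with exactly one corner above $y=n-x$, necessarily exactly one unit above it'' is not merely a convenience; it is essential, because under the literal reading the statement is false. Take $n=4$ and $\alpha=(4,4,4,1)$: its lattice path has corners only at $(0,3)$, $(3,3)$, and $(3,0)$, of which only the peak $(3,3)$ lies above $y=4-x$ (two units above), yet no rearrangement of $\alpha$ lies in $PF_{4,1}$, since of the three cars preferring spot $4$ at most two can ever park (in spots $4$ and $3$). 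So the correct precise hypothesis is the one you wrote: in the increasing rearrangement $(b_1,\dots,b_n)$ there is exactly one index $j_0$ with $b_{j_0}>j_0$, and moreover $b_{j_0}=j_0+1$. (With this reading, your part (i) is in fact immediate from Theorem \ref{DecreasingNaplesiffDyck}: such a path is by definition a $1$-lattice path, so the decreasing preference lies in $PF_{n,1}$; no $\Tau$ computation is needed there.)

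The genuine gap is part (ii), which you correctly identify as the crux and then do not prove: you name an invariant (``the occupied set differs from an initial segment by at most one hole within reach'') but neither state it precisely nor show it is preserved, and since rearrangement-invariance is exactly what separates this conjecture from the failure in Example \ref{k-naples example}, a proposal that defers this step has not proved the conjecture. What you missed is that Theorem \ref{TheBigBoi} --- the tool you invoke for part (i) --- disposes of part (ii) uniformly, with no analysis of the parking process at all. The hypothesis gives two facts about the \emph{multiset} of preferences: exactly $j_0-1$ entries are $\le j_0$, and at least two entries equal $j_0+1$ (indeed $j_0+1\le n$ since entries lie in $[n]$, so $\#\{i:b_i\le j_0+1\}\ge j_0+1$ while $\#\{i:b_i\le j_0\}=j_0-1$). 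Now let $\beta=(\beta_1,\dots,\beta_n)$ be an \emph{arbitrary} rearrangement and apply $\Tau$. Since $\tau$ never increases an entry, for every $j\ne j_0$ we get $\#\{i:\tau(\beta_i)\le j\}\ge\#\{i:\beta_i\le j\}\ge j$. At $j=j_0$, let $i_1<i_2$ be the first two positions with $\beta_{i_1}=\beta_{i_2}=j_0+1$: either $\tau(\beta_{i_1})=j_0$ already, or $\tau(\beta_{i_1})=j_0+1$, in which case $\beta_{i_2}=\tau(\beta_{i_1})$ and $\beta_{i_2}\ne 1$ force $\tau(\beta_{i_2})=j_0$ by Definition \ref{TauDefinition}. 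Either way at least one copy of $j_0+1$ is lowered to $j_0$, so $\#\{i:\tau(\beta_i)\le j_0\}\ge(j_0-1)+1=j_0$. Thus $\Tau(\beta)$ satisfies the parking-function criterion at every index, hence $\Tau(\beta)\in PF_n$ and $\beta\in PF_{n,1}$ by Theorem \ref{TheBigBoi}. This is the repair your sketch was reaching for; the process-invariant route may also be workable, but it is precisely the part you left undone.
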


\section*{Acknowledgements}
The authors thank Alyson Baumgardner and Katie Johnson for introducing us to the Naples parking function problem. We also thank Ayomikun Adeniran for helpful conversations at the beginning stages of this project.
\bibliographystyle{alpha}
\bibliography{ref}

\end{document}